\newtheorem{theorem}{Theorem}%[\arabic{section}]
\newtheorem{corollary}[theorem]{Corollary}
\newtheorem{proposition}[theorem]{Proposition}
\newtheorem{lemma}[theorem]{Lemma}
\newtheorem{define}[theorem]{Definition}
\newtheorem{example}[theorem]{Example}
\newtheorem{remark}[theorem]{Remark}
\DeclareMathOperator{\He}{He}
\DeclareMathOperator*{\diag}{diag}
\DeclareMathOperator{\eps}{\varepsilon}
\def\E{\mathbb{E}}
\def\tablepos{h}
\newenvironment{proof}{{\it Proof :~}}{\hfill$\diamondsuit$\\}
\begin{document}
%\raggedbottom
%\setstretch{0.9}

\begin{frontmatter}

\title{Stability analysis and stabilization of stochastic linear impulsive, switched and sampled-data systems under dwell-time constraints}

\author{Corentin Briat}\ead{briatc@bsse.ethz.ch,corentin@briat.info}\ead[url]{http://www.briat.info}
%\author[Second]{ and Alexandre Seuret}\ead{Alexandre.Seuret@gipsa-lab.grenoble-inp.fr}\ead[url]{http://www.gipsa-lab.fr/\textasciitilde alexandre.seuret}

\address{Department of Biosystems Science and Engineering, ETH--Z\"{u}rich, Switzerland.}

\begin{keyword}
Stochastic impulsive systems; stochastic sampled-data systems; dwell-times; clock-dependent conditions
\end{keyword}

\begin{abstract}
Impulsive systems are a very flexible class of systems that can be used to represent switched and sampled-data systems. We propose to extend here the previously obtained results on deterministic impulsive systems to the stochastic setting. The concepts of mean-square stability and dwell-times are utilized in order to formulate relevant stability conditions for such systems. These conditions are formulated as convex clock-dependent linear matrix inequality conditions that are applicable to robust analysis and control design, and are verifiable using discretization or sum of squares techniques. Stability conditions under various dwell-time conditions are obtained and non-conservatively turned into state-feedback stabilization conditions. The results are finally applied to the analysis and control of stochastic sampled-data systems. Several comparative examples demonstrate the accuracy and the tractability of the approach.
\end{abstract}
\end{frontmatter}

\section{Introduction}
\sloppy

Impulsive systems \cite{Michel:08,Goebel:09} arise in many applications such as such as ecology \cite{Yu:06b,Verriest:09d}, epidemiology \cite{Briat:09h} and sampled-data systems/control \cite{Ye:98,Naghshtabrizi:08,Briat:13d}. Lyapunov-based methods and dwell-time notions \cite{Hespanha:04} can be used to establish various stability concepts and conditions. Various concepts of dwell-time have been considered over the past decades. Important examples are the minimum dwell-time \cite{Morse:96,Hespanha:04,Geromel:06b,Briat:11l}, the average dwell-time \cite{Hespanha:04,Hespanha:08}, the persistent dwell-time \cite{Hespanha:04,Goebel:09,Zhang:15a}, the maximum dwell-time \cite{Briat:11l,Briat:12h,Briat:13d}, the ranged-dwell-time \cite{Briat:11l,Briat:12h,Briat:13d} and the mode-dependent dwell-time \cite{Briat:13b,Briat:14f,Zhang:15a}. Notably, minimum dwell-time stability conditions for linear impulsive systems have been obtained in \cite{Briat:11l} following the ideas developed in \cite{Geromel:06b} in the context of switched systems. It is shown in \cite{Briat:11l} that despite a sufficient stability condition can be expressed as a tractable linear matrix inequality problem, this type of conditions are only applicable to linear time-invariant systems without uncertainties and are impossible to convert into design conditions, even in the simple case of state-feedback design \cite{Briat:11l,Briat:12h,Briat:13d}. These drawbacks motivated the consideration of \emph{looped-functionals} \cite{Seuret:12,Briat:11l,Briat:12h,Briat:15f}, a particular class of indefinite functionals satisfying a certain boundary condition, referred to as the \emph{looping-condition}. These functionals have the merit of leading to conditions that are convex in the matrices of the system, hence easily applicable to linear uncertain systems with time-varying uncertainties and to nonlinear systems \cite{Peet:14}. The price to pay for these interesting properties is that the resulting conditions are infinite-dimensional semidefinite programs, that may then be solved using discretization techniques \cite{Gu:01b,Allerhand:11} or sum of squares programming \cite{Parrilo:00,Briat:13d}. Due to the presence of additional infinite-dimensional decision functions, the complexity of the conditions may not scale very well with the dimension of the system and/or the number of basis functions used to express infinite-dimensional variables.  Moreover, looped-functionals are limited to stability analysis and are difficult to consider for design purposes. To circumvent this problem, \emph{clock-dependent conditions} have been considered in \cite{Briat:13d,Briat:14a,Briat:14f,Briat:15f} where it is shown that these conditions possess the same advantages than looped-functional conditions (i.e. the possibility of considering nonlinear and uncertain linear systems) together with the additional possibility of using them for design purposes. Despite these frameworks have been recently shown to be theoretically equivalent in \cite{Briat:15f}, because the same accuracy is attained with a lower computational cost, it is then preferable to use clock-dependent stability conditions rather than looped-functional-based ones.

We propose to address here the case of stochastic impulsive systems where the continuous-time part consists of a linear stochastic differential equation with state multiplicative noise \cite{Oksendal:03} and the discrete-time part is a stochastic difference equation \cite{Rodkina:11}. The impulses arrival times are considered here as purely time-dependent and deterministic. Stochastic hybrid systems have been extensively studied in the literature; see e.g. \cite{Hou:01,Teel:14} and references therein. However, very few address the case where both parts of the impulsive system are affected by noise \cite{Chen:08,Feng:10} and, when this is the case, average dwell-time conditions, state-dependent impulse times or stochastically arriving impulses \cite{Hespanha:06,Antunes:13} are most of the time only considered.

The main goal of the paper is therefore to develop stability conditions in the same spirit  as in  \cite{Briat:11l,Briat:13d,Briat:14f,Briat:15f} where the deterministic case was considered. To this aim, we consider the notion of dwell-times and obtain a necessary and sufficient condition characterizing the mean-square stability under constant dwell-time, and sufficient conditions establishing the mean-square stability under ranged and minimum dwell-time conditions. These conditions, albeit stated in a more implicit way, naturally generalize those obtained in the deterministic setting as they reduce to the deterministic conditions when the noise-related terms are set to zero. Due to the implicit structure of the conditions, they cannot be checked per se. In order to overcome this difficulty, and in the same spirit as in \cite{Briat:13d,Briat:14f,Briat:15f}, lifted versions of the conditions expressed as clock-dependent linear matrix inequalities are considered. These conditions have the benefits of being convex in the matrices of the system, a property that enables their use in the contexts of uncertain systems and control design. Because these conditions are infinite-dimensional, they cannot be checked directly and need to be relaxed. Possible relaxation methods include the approximation of infinite-dimensional variables using a piecewise-linear approximation \cite{Gu:01b,Allerhand:11} or the use of sum of squares programming \cite{Parrilo:00,Briat:13d}. It is emphasized that these relaxed conditions are asymptotically exact in the sense that they can approximate arbitrarily well the original conditions they have been derived from. These conditions are shown to include those of \cite{Shaked:14}, which characterize the mean-square stability stochastic linear switched systems, by exploiting the possibility of representing switched systems as impulsive systems. The approach is then non-conservatively extended to state-feedback design, for which convex conditions are also obtained. Finally, the analysis and control of aperiodic stochastic sampled-data systems is performed using the proposed method by reformulating the considered sampled-data system into an impulsive system. It is worth mentioning here that sampled-data systems driven by multiplicative noise do not seem to have been thoroughly considered in the literature. This paper therefore fills this gap by providing  tractable conditions for both the analysis and the control of such systems. Various comparative examples demonstrate the accuracy and tractability of the approach.

\textbf{Outline:} The structure of the paper is as follows: in Section \ref{sec:preliminary} preliminary definitions and results are given. Section \ref{sec:stability} is devoted to dwell-time stability analysis while Section \ref{sec:stabz} addresses dwell-time stabilization. Sampled-data systems are finally treated in Section \ref{sec:SD}. Examples are considered in the related sections.

\textbf{Notations:} The cone of symmetric (positive definite) matrices of dimension $n$ is denoted by $\mathbb{S}^n$ ($\mathbb{S}^n_{\succ0}$). The sets of integers and whole numbers are denoted by $\mathbb{N}$ and $\mathbb{N}_0$, respectively. Given a vector $v$, its 2-norm is defined as $||v||_2=(v^Tv)^{1/2}$. For a square matrix $A$, we define $\He[A]:=A+A^T$. The symbols $\oplus$ and $\otimes$ are used for denoting the Kronecker sum and product, respectively.

\section{Preliminaries}\label{sec:preliminary}

%\subsection{System definition}.

From now on, the following class of linear stochastic impulsive system
\begin{equation}\label{eq:mainsyst}
\begin{array}{rcl}
  d x(t)&=&[Ax(t)+B_{c}^1u_c(t)]dt+E_cx(t) dW_1(t)\\
  &&+B_{c}^2u_c(t)dW_2(t),\ t\ne t_k\\
  x(t_k^+)&=&Jx(t_k)+B_{d}^1u_d(k)+E_{d}x(t_k)\nu_1(k)\\
  &&+B_{d}^2u_d(k)\nu_2(k),\ k\in\mathbb{N}\\
  x(0)&=&x_0
\end{array}
\end{equation}
is considered where $x,x_0\in\mathbb{R}^n$, $u_c\in\mathbb{R}^{m_c}$ and $u_d\in\mathbb{R}^{m_d}$ are the state of the system, the initial condition, the continuous control input and the discrete control input, respectively. The notation $x(t^+)$ is a shorthand for $\lim_{s\downarrow t}x(s)$, i.e. the trajectories are assumed to be left-continuous. The sequence of impulse instants $\{t_k\}_{k\in\mathbb{N}}$ is defined such that $T_k:=t_{k+1}-t_k\ge T_{min}$ for some $T_{min}>0$. This then implies that $\{t_k\}_{k\in\mathbb{N}_0}$, $t_0=0$, is increasing without bound. The processes $W_1(t)$ and $W_2(t)$ are independent scalar Wiener processes; i.e. for $i=1,2$, we have that $W_{i}(0)=0$, $W_{i}(t)$ is almost surely everywhere continuous and has independent increments $W_i(t)-W_i(s)$, $0\le s<t$, that are normally distributed with zero mean and variance $t-s$. The sequence $\{\nu_i(k)\}_{k\in\mathbb{N}_0}$, $i=1,2$, is sequence of independent identically distributed random variables with zero mean and unit variance that are independent of $x(t_k)$ for all $k\in\mathbb{N}$. Let $(\Omega,\mathcal{F},(\mathcal{F}_{t,k})_{t\ge0,k\in\mathbb{N}_0},\mathbb{P})$ be a complete probability space with $\sigma$-algebra $\mathcal{F}$ and natural (hybrid) filtration $\mathcal{F}_{t,k}$ (for more details on hybrid filtrations see \cite{Teel:14b}).  Let $\E[\cdot]$ be the expectation operator with respect to $\mathbb{P}$.

\begin{define}
  We say that the system \eqref{eq:mainsyst} with $u_d,u_c\equiv0$ is mean-square asymptotically stable if $\E[||x(t)||_2^2]\to0$ as $t\to\infty$.
\end{define}

\begin{lemma}\label{lem:stab}
  Let us consider a sequence $\{t_k\}$ for which $0<T_{min}\le T_k\le T_{max}<\infty$ for all $k\in\mathbb{N}_0$. Then, the system \eqref{eq:mainsyst} with $u_d,u_c\equiv0$ is mean-square asymptotically stable if and only if $\E[||x(t_k^+)||_2^2]\to0$ as $k\to\infty$.
\end{lemma}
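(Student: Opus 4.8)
The plan is to reduce the equivalence to a deterministic analysis of the second-moment matrix between consecutive impulses. Setting $u_c\equiv u_d\equiv0$, the flow on each interval $(t_k,t_{k+1}]$ is governed by $dx=Ax\,dt+E_cx\,dW_1$. First I would introduce $M(t):=\E[x(t)x(t)^\T]$ and show, via It\^o's formula applied to the entries $x_ix_j$ followed by taking expectations, that $M$ satisfies the linear matrix differential equation $\dot M(t)=AM(t)+M(t)A^\T+E_cM(t)E_c^\T$ on $(t_k,t_{k+1}]$, with initial value $M(t_k^+)=\E[x(t_k^+)x(t_k^+)^\T]$; here the post-jump value $x(t_k^+)$ is $\mathcal{F}_{t_k,k}$-measurable and the increments of $W_1$ on the interval are independent of it, so the moment equation closes.

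Vectorizing gives $\tfrac{d}{dt}\vec(M(t))=\mathcal{A}\,\vec(M(t))$ with $\mathcal{A}:=A\oplus A+E_c\otimes E_c$, hence $\vec(M(t))=e^{\mathcal{A}(t-t_k)}\vec(M(t_k^+))$ for $t\in(t_k,t_{k+1}]$. The key step is to convert this into a two-sided, \emph{uniform} comparison between $\E[\lVert x(t)\rVert_2^2]=\operatorname{tr}(M(t))$ and $\E[\lVert x(t_k^+)\rVert_2^2]=\operatorname{tr}(M(t_k^+))$. Since $0<t-t_k\le T_{max}<\infty$, the constants $\gamma:=\max_{\tau\in[0,T_{max}]}\lVert e^{\mathcal{A}\tau}\rVert$ and $\gamma':=\max_{\tau\in[0,T_{max}]}\lVert e^{-\mathcal{A}\tau}\rVert$ are finite and independent of $k$. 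Using that $M(t)\succeq0$, so that $\lVert\vec(M)\rVert_2=\lVert M\rVert_F\le\operatorname{tr}(M)\le\sqrt{n}\,\lVert M\rVert_F$, I obtain, for every $t\in(t_k,t_{k+1}]$,
\[
  \E[\lVert x(t)\rVert_2^2]\le\sqrt{n}\,\gamma\,\E[\lVert x(t_k^+)\rVert_2^2], \qquad \E[\lVert x(t_k^+)\rVert_2^2]\le\sqrt{n}\,\gamma'\,\E[\lVert x(t)\rVert_2^2].
\]

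Both implications then follow from these bounds. For sufficiency, assume $\E[\lVert x(t_k^+)\rVert_2^2]\to0$; since $T_k\le T_{max}$ and $t_k\to\infty$ (guaranteed by $T_{min}>0$), every sufficiently large $t$ lies in some interval $(t_{k(t)},t_{k(t)+1}]$ with $k(t)\to\infty$, and the left inequality gives $\E[\lVert x(t)\rVert_2^2]\le\sqrt{n}\,\gamma\,\E[\lVert x(t_{k(t)}^+)\rVert_2^2]\to0$. For necessity, assume $\E[\lVert x(t)\rVert_2^2]\to0$; evaluating the right inequality at $t=t_{k+1}\to\infty$ yields $\E[\lVert x(t_k^+)\rVert_2^2]\le\sqrt{n}\,\gamma'\,\E[\lVert x(t_{k+1})\rVert_2^2]\to0$. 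The main obstacle is not any single estimate but ensuring the comparison constants $\gamma,\gamma'$ are \emph{uniform} over the entire unbounded time axis: this is exactly what the ranged dwell-time assumption $T_{min}\le T_k\le T_{max}$ provides, with $T_{max}<\infty$ bounding the exponentials and $T_{min}>0$ forcing $t_k\to\infty$, so that the discrete-time limit genuinely controls the continuous-time behaviour.
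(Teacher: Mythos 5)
Your proof is correct, but it takes a genuinely different route from the paper's. The paper treats the direction (mean-square stability $\Rightarrow$ $\E[\lVert x(t_k^+)\rVert_2^2]\to0$) as immediate, and proves the converse with a scalar differential inequality: with $u_c,u_d\equiv0$ it notes that $\tfrac{d}{dt}\E[\lVert x(t)\rVert_2^2]=\E[x(t)^\T(\He[A]+E_c^\T E_c)x(t)]\le\beta\,\E[\lVert x(t)\rVert_2^2]$ on each $(t_k,t_{k+1})$ for $\beta>0$ large enough, so a Gronwall argument yields the uniform bound $\E[\lVert x(t_k+\tau)\rVert_2^2]\le e^{\beta T_{max}}\E[\lVert x(t_k^+)\rVert_2^2]$, which plays exactly the role of your constant $\sqrt{n}\,\gamma$. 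You instead propagate the full second-moment matrix exactly, $\vec(M(t))=e^{\mathcal{A}(t-t_k)}\vec(M(t_k^+))$ with $\mathcal{A}=A\oplus A+E_c\otimes E_c$, and exploit the invertibility of the matrix exponential to obtain a \emph{two-sided} uniform comparison. What your approach buys: both implications are handled by one mechanism with explicit constants, and your backward bound $\E[\lVert x(t_k^+)\rVert_2^2]\le\sqrt{n}\,\gamma'\,\E[\lVert x(t_{k+1})\rVert_2^2]$ makes fully rigorous the direction the paper dismisses as clear (which otherwise implicitly uses right-continuity of $t\mapsto\E[\lVert x(t)\rVert_2^2]$ at the jump times); moreover, you introduce precisely the operator $e^{\mathcal{A}\tau}$ that reappears in the paper's Theorem~\ref{th:periodic}, statement~\eqref{item:periodic:1b}, so your lemma proof foreshadows the later discretization argument. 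What the paper's approach buys: it is shorter and more elementary --- a one-line differential inequality on the scalar $\E[\lVert x(t)\rVert_2^2]$, with no It\^{o} computation on $xx^\T$, no vectorization, and no norm--trace equivalences. Both arguments ultimately rest on the same structural facts, which you identify explicitly: $T_{max}<\infty$ makes the interval-wise comparison constants uniform in $k$, and $T_{min}>0$ forces $t_k\to\infty$ so that the discrete limit controls the continuous one.
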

\begin{proof}
It is clear that if the system is mean-square asymptotically stable then $\E[||x(t_k^+)||_2^2]\to0$ as ${k\to\infty}$. To prove the converse, first note that, when $u_d,u_c\equiv0$, $\frac{d\E[||x(t)||_2^2]}{dt}=\E[x(t)^T(\He[A]+E_c^TE_c)x(t)]\le\beta\E[||x(t)||_2^2]$ for all $t\in(t_k,t_{k+1})$ and for any large enough $\beta>0$. This then implies that $\E[||x(t_k+\tau)||_2^2]\le   e^{\beta T_{max}}\E[||x(t_k^+)||_2^2]$, which implies in turn that if $\E[||x(t_k^+)||_2^2]\to0$ as $k\to\infty$, then $\sup_{\tau\in(0,T_k]}\E[||x(t_k+\tau)||_2^2]\to0$ as $k\to\infty$. The proof is complete.
\end{proof}

The above result remains valid as long as the impulses are persistent, i.e. $T_k<\infty$. In the case where the impulses are not persistent (i.e. there exists a $k^*\in\mathbb{N}_0$ such that $T_{k}<\infty$ for $k=0,\ldots,k^*-1$ and $T_{k^*}=\infty$), then the asymptotic mean-square stability of \eqref{eq:mainsyst} eventually becomes equivalent to the asymptotic mean-square stability of the continuous-time part of \eqref{eq:mainsyst}. Note also that this result remains valid when state-dependent control inputs are considered (e.g. state-, output- and dynamic-output feedback).
\begin{remark}\label{rem:exp}
%  By considering the change of variables $z(t)=e^{\alpha t}x(t)$ and applying the above result to $z(t)$, we immediately that $\E[||x(t)||_2^2]\to0$ exponentially with rate $2\alpha$ as $t\to\infty$ if and only if  $\E[||x(t_k)||_2^2]\to0$ geometrically with rate $e^{-2\epsilon \alpha}$ as $k\to\infty$.
  By considering the change of variables $z(t)=e^{\alpha t}x(t)$ and applying the above result to $z(t)$, we immediately get that $\E[||x(t)||_2^2]$ exponentially converges to 0 with rate $2\alpha$ as $t\to\infty$ if and only if  $\E[||x(t_k^+)||_2^2]$ geometrically converges to 0 with rate $e^{-2\alpha T_{min}}$ as $k\to\infty$.
\end{remark}

%\blue{Note that if for some $k\in\mathbb{N}_0$ we have $T_k=\infty$, then the system becomes a standard diffusion process and the stability of the systems is the mean-square stability of the system is equivalent to the mean-square stability of the diffusive flow (i.e. the continuous-time part).}

\section{Mean-square stability of stochastic linear impulsive systems}\label{sec:stability}

This section is devoted to the mean-square stability analysis of the system \eqref{eq:mainsyst} with no input (i.e. $u_c\equiv0$ and $u_d\equiv0$) under constant, ranged and minimum dwell-time. Before stating the main results of this section, it is convenient to introduce here the following definition:
\begin{define}
  The fundamental solution of the continuous-time part of the system \eqref{eq:mainsyst} with $u_c\equiv0$ and $u_d\equiv0$ is denoted by $\Phi:\mathbb{R}_{\ge0}\to\mathbb{R}^{n\times n}$ where
  \begin{equation}\label{eq:dPhi}
    d\Phi(t)=A\Phi(t)dt+E_c\Phi(t)dW_1(t),\ t\ge0
  \end{equation}
  where $\Phi(0)=I$ and $W_1(t)$ is defined as in \eqref{eq:mainsyst}.
\end{define}
Note that for every $t\ge0$, $\Phi(t)$ is an $\mathcal{F}_{t,k}$-measurable random variable. This fundamental solution therefore naturally generalizes the deterministic one, which is readily retrieved by setting $E_c=0$. Associated with $\Phi(t)$, we define for any $Z\in\mathbb{S}^{n}$ the following quadratic expression
\begin{equation}\label{eq:xi}
  \Xi_Z(s):=\Phi(s)^TZ\Phi(s),\ s\ge0.
\end{equation}
With these definitions in mind, we can now move forward to the main results of the section.

\subsection{Constant dwell-time}

Let us first consider the case of constant dwell-time, that is, the case where, for some $\bar{T}>0$, we have that $T_k=\bar{T}$ for all $k\in\mathbb{N}_0$. We then have the following result:
\begin{theorem}\label{th:periodic}
  The following statements are equivalent:
  \begin{enumerate}[(i)]
    \item\label{item:periodic:1} The system \eqref{eq:mainsyst} with $u_c\equiv0,u_d\equiv0$  is mean-square asymptotically stable under constant dwell-time, that is, for the sequence of impulse times verifying $T_k=\bar{T}$, $k\in\mathbb{N}$,
\item\label{item:periodic:1b} The matrix $\mathcal{M}(\bar T):=\exp(\mathcal{A}\bar{T})\mathcal{J}$ defined with
\begin{equation*}
      \hspace{-10pt}\mathcal{A}:=A\oplus A+E_c\otimes E_c\ \textnormal{and}\ \mathcal{J}:=J\otimes J+E_d\otimes E_d
\end{equation*}
is Schur stable.
    \item There exists a matrix $P\in\mathbb{S}^n_{\succ0}$ such that
    \begin{equation}\label{eq:expon}
      \E[J^T\Xi_P(\bar{T}) J+E_d^T\Xi_P(\bar{T}) E_d]-P\prec0
    \end{equation}
    holds where $\Xi_P(\cdot)$ is defined in \eqref{eq:xi}.\label{item:periodic:2}
    \item\label{item:periodic:3} There exist a matrix-valued function $S:[0,\bar{T}]\to\mathbb{S}^n$, $S(0)\succ0$, and a scalar $\eps>0$ such that the conditions
    \begin{equation}\label{eq:p31}
        -\dot{S}(\tau)+A^TS(\tau)+S(\tau)A+E_c^TS(\tau)E_c\preceq0
    \end{equation}
    and
    \begin{equation}\label{eq:p32}
          J^TS(\bar{T})J-S(0)+E_d^TS(\bar{T})E_d+\eps I\preceq0
    \end{equation}
    hold for all $\tau\in[0,\bar{T}]$.
    \item\label{item:periodic:4} There exist a matrix-valued function $S:[0,\bar{T}]\to\mathbb{S}^n$, $S(\bar{T})\succ0$, and a scalar $\eps>0$ such that the conditions
    \begin{equation}\label{eq:p41}
        \dot{S}(\tau)+A^TS(\tau)+S(\tau)A+E_c^TS(\tau)E_c\preceq0
    \end{equation}
    and
    \begin{equation}\label{eq:p42}
          J^TS(0)J-S(\bar{T})+E_d^TS(0)E_d+\eps I\preceq0
    \end{equation}
        hold for all $\tau\in[0,\bar{T}]$.
  \end{enumerate}
\end{theorem}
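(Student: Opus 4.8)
The plan is to route every statement through the second-moment dynamics and to establish the chain (i)$\Leftrightarrow$(ii)$\Leftrightarrow$(iii) together with (iii)$\Leftrightarrow$(iv)$\Leftrightarrow$(v). Setting $Q(t):=\E[x(t)x(t)^\T]$, Itô's formula applied to $x(t)x(t)^\T$ on $(t_k,t_{k+1})$ (the $dW_1$-martingale term vanishing in expectation) gives $\dot Q=AQ+QA^\T+E_cQE_c^\T$, while at an impulse the zero-mean, unit-variance, independent $\nu_1(k)$ annihilates the cross terms and leaves $Q(t_k^+)=JQ(t_k)J^\T+E_dQ(t_k)E_d^\T$. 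Vectorizing via $\vec(MXN)=(N^\T\otimes M)\vec(X)$ turns the flow into $\tfrac{d}{dt}\vec(Q)=\mathcal A\vec(Q)$ and the jump into $\vec(Q(t_k^+))=\mathcal J\vec(Q(t_k))$, so over one period the moment is propagated by $\mathcal M(\bar T)=\exp(\mathcal A\bar T)\mathcal J$ (the pre- and post-jump monodromies being similar, hence cospectral). This computation is the backbone of the proof.

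For (i)$\Leftrightarrow$(ii) I would invoke Lemma~\ref{lem:stab} with $T_{min}=T_{max}=\bar T$, which reduces mean-square stability to $\E[||x(t_k^+)||_2^2]=\operatorname{tr}Q(t_k^+)\to0$; since $Q\succeq0$ this is equivalent to $\mathcal M(\bar T)^k$ driving every reachable moment to zero, and the deterministic initial data $Q(0)=x_0x_0^\T$ sweep the rank-one PSD matrices, whose span is $\mathbb{S}^n$. Thus (i) is equivalent to $\mathcal M(\bar T)^k(X)\to0$ for all $X\succeq0$. The \emph{main obstacle} is upgrading this symmetric-cone statement to Schur stability of the full $n^2\times n^2$ matrix. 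Here I would read $\mathcal M(\bar T)$ as the operator $\Lambda:X\mapsto\E[\Phi(\bar T)(JXJ^\T+E_dXE_d^\T)\Phi(\bar T)^\T]$ on $\mathbb{R}^{n\times n}$, which is completely positive (the jump map is in Kraus form and the flow is an average of the completely positive maps $X\mapsto\Phi(\bar T)X\Phi(\bar T)^\T$). For a positive map one has the norm identity $||\Lambda^k||=||\Lambda^k(I)||$, so $\rho(\mathcal M(\bar T))=\lim_k||\Lambda^k(I)||^{1/k}$ and $\mathcal M(\bar T)$ is Schur iff $\Lambda^k(I)\to0$; as every PSD matrix is dominated by a multiple of $I$, this is exactly the condition just obtained, closing (i)$\Leftrightarrow$(ii).

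For (ii)$\Leftrightarrow$(iii) I would read \eqref{eq:expon} as a strict Stein inequality for the adjoint map. Using the adjoint Lyapunov ODE $\dot N=A^\T N+NA+E_c^\T NE_c$ with $N(s)=\E[\Phi(s)^\T P\Phi(s)]$ gives $\vec(\E[\Xi_P(\bar T)])=\exp(\mathcal A^\T\bar T)\vec(P)$, so the left-hand side of \eqref{eq:expon} vectorizes to $(\mathcal M(\bar T)^\T-I)\vec(P)$ and \eqref{eq:expon} reads $\mathcal L^*(P)\prec P$ for the positive operator $\mathcal L^*(P):=J^\T\E[\Xi_P(\bar T)]J+E_d^\T\E[\Xi_P(\bar T)]E_d$. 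By the Lyapunov theory of positive operators such a $P\succ0$ exists iff $\rho(\mathcal L^*)<1$, which by the same complete-positivity argument is equivalent to $\mathcal M(\bar T)$ being Schur (sufficiency via $P=\sum_{k\ge0}(\mathcal L^*)^k(I)$, necessity via the monotonicity $(\mathcal L^*)^k(P)\preceq\lambda^kP\to0$).

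Finally (iii)$\Leftrightarrow$(iv)$\Leftrightarrow$(v) is a comparison argument. The substitution $\tilde S(\tau)=S(\bar T-\tau)$ interchanges \eqref{eq:p31}--\eqref{eq:p32} with \eqref{eq:p41}--\eqref{eq:p42}, so it suffices to relate (v) to (iii). The Itô identity $\tfrac{d}{d\tau}\E[\Phi(\tau)^\T S(\tau)\Phi(\tau)]=\E[\Phi(\tau)^\T(\dot S+A^\T S+SA+E_c^\T SE_c)\Phi(\tau)]$ makes $\tau\mapsto\E[\Phi(\tau)^\T S(\tau)\Phi(\tau)]$ nonincreasing under \eqref{eq:p41}; with $P=S(\bar T)$ this yields $\E[\Xi_P(\bar T)]\preceq S(0)$, which together with \eqref{eq:p42} gives \eqref{eq:expon}, while conversely $S(\tau)=\E[\Phi(\bar T-\tau)^\T P\Phi(\bar T-\tau)]$ recovers (v) from (iii) for sufficiently small $\eps>0$.
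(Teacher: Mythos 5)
Your proof is correct --- the covariance dynamics, their vectorization onto $\mathcal{A}$ and $\mathcal{J}$, the positive-operator arguments, and the clock-variable comparisons all check out --- but it is organized around a genuinely different decomposition than the paper's. The paper never proves (ii)$\Leftrightarrow$(iii) directly: it links (iii) to the stability statement (i) by trajectory arguments (pre- and post-multiplying \eqref{eq:expon} by $x(t_k^+)$ for one direction; for the other, the explicit sum $P^*=\sum_{k}Q_k$ of impulse-time samples of the adjoint matrix system \eqref{eq:Qsyst}), and then obtains (i)$\Leftrightarrow$(ii) by vectorizing \eqref{eq:Qsyst} and discretizing at the period. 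You instead decouple the algebra from the probability: stability enters only once, in (i)$\Leftrightarrow$(ii), and (ii)$\Leftrightarrow$(iii) is carried entirely by Stein theory for positive operators --- note that your Neumann series $\sum_{k\ge0}(\mathcal{L}^*)^k(I)$ is in substance the same object as the paper's $P^*$, only justified spectrally rather than through stability of \eqref{eq:Qsyst}. The real payoff of your route is the norm identity $\|\Lambda^k\|=\|\Lambda^k(I)\|$ for positive maps: statement (ii) asserts Schur stability of the full $n^2\times n^2$ matrix $\mathcal{M}(\bar{T})$, and since the symmetric and the antisymmetric matrices are complementary invariant subspaces of $\Lambda$, convergence of second moments (which only sweeps $\mathbb{S}^n$) does not by itself control the whole spectrum; the paper passes over this point silently when it declares stability of \eqref{eq:Qsyst} equivalent to Schur stability of the discretized vectorized system, whereas your Russo--Dye argument closes it. What the paper's route buys in exchange is elementarity and quantitative information: it needs nothing beyond Lemma~\ref{lem:stab}, and its Lyapunov decrease $\E[\|x(t_{k+1}^+)\|_{2,P}^2]\le(1-\varepsilon)\E[\|x(t_k^+)\|_{2,P}^2]$ combined with Remark~\ref{rem:exp} yields explicit exponential decay rates. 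Your remaining equivalences --- (iii)$\Leftrightarrow$(v) by the It\^{o} comparison and (iv)$\Leftrightarrow$(v) by the reversal $\tau\mapsto\bar{T}-\tau$ --- coincide with the paper's treatment of (iii)$\Leftrightarrow$(iv)$\Leftrightarrow$(v) up to which pair is connected directly; both you and the paper use, without comment, the fact that $s\mapsto\E[\Phi(s)^TP\Phi(s)]$ solves the adjoint Lyapunov ODE, which merits the one-line justification that this propagation map is the operator exponential of the Lyapunov generator and therefore commutes with it.
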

\begin{proof}
\textbf{Proof that \eqref{item:periodic:2} $\Rightarrow$ \eqref{item:periodic:1}.} The proof of this implication follows from the fact that by pre- and post-multiplying \eqref{eq:expon} by $x(t_k^+)^T$ and $x(t_k^+)$, respectively, we immediately get that $\E[||x(t_{k+1}^+)||_{2,P}^2]-(1-\eps)\E[||x(t_k^+)||_{2,P}^2]\le0$ for some $\eps\in(0,1)$, all $k\in\mathbb{N}_0$ and where $||v||_{2,P}^2:=v^TPv$. This implies that $\E[||x(t_k^+)||_{2,P}^2]\to0$ as $k\to\infty$, proving then the mean-square asymptotic (exponential) stability of the system (invoking Lemma \ref{lem:stab} and Remark \ref{rem:exp}).

\textbf{Proof that \eqref{item:periodic:1} $\Rightarrow$ \eqref{item:periodic:2}.} The proof of this statement is based on the explicit construction of a matrix $P\in\mathbb{S}^n_{\succ0}$ that verifies the condition \eqref{eq:expon} whenever the system \eqref{eq:mainsyst}  is mean-square asymptotically (exponentially) stable. To do this, let us define first the following expression
\begin{equation}\label{eq:Qsyst}
  \begin{array}{rcl}
    \dot{Q}(t)&=&A^TQ(t)+Q(t)A+E_c^TQ(t)E_c,\ t\ne t_k\\
    Q(t_k^+)&=&J^TQ(t_k)J+E_d^TQ(t_k)E_d,\ k\in\mathbb{N}
  \end{array}
\end{equation}
with $Q_0=Q(0)=Y$ for any $Y\in\mathbb{S}^n_{\succ0}$. Note that the mean-square exponential stability of the system is equivalent to the exponential stability of the above matrix-valued linear differential equations since $\E[x(t)^TYx(t)]=x_0^TQ(t)x_0$. Let us
define  $P^*=\sum_{k=0}^\infty Q_k\succ0$, $Q_k:=Q(t_k^+)\succeq0$, $Q_0=Y\succ0$. Note that the latter sum is well-defined because \eqref{eq:Qsyst} is exponentially stable and also observe that
\begin{equation}\label{eq:kdslkdslk}
%  \E\left[J^T\Phi(T)^TQ_k\Phi(T)J+E_d^T\Phi(T)^TQ_k\Phi(T)E_d\right]=Q_{k+1}.
  \E\left[J^T\Xi_{Q_k}(\bar T)J+E_d^T\Xi_{Q_k}(\bar  T)E_d\right]=Q_{k+1}.
  \end{equation}
 Substituting then $P^*$ in place of $P$ in \eqref{eq:expon} and using \eqref{eq:kdslkdslk} yield
\begin{equation}
\sum_{k=1}^\infty Q_k-\sum_{k=0}^\infty Q_k=-Q_0=-Y\prec0,
\end{equation}
which proves the result.

\textbf{Proof that \eqref{item:periodic:1} $\Leftrightarrow$ \eqref{item:periodic:1b}.} The mean-square asymptotic stability of \eqref{eq:mainsyst} is equivalent to the asymptotic stability of \eqref{eq:Qsyst}. Vectorizing this matrix differential equation yields the system $\dot{z}(t)=\mathcal{A}z(t)$, $t\ne t_k$, and $z(t_k^+)=\mathcal{J}z(t_k)$, $k\in\mathbb{N}$. Since the impulses arrive periodically, then the stability of this system is equivalent to the stability of the discretized system $z(t_{k+1}^+)=\exp(\mathcal{A}\bar T)\mathcal{J}z(t_k^+)$ and the result follows.

\textbf{Proof that \eqref{item:periodic:3} $\Rightarrow$ \eqref{item:periodic:2}.} Integrating \eqref{eq:p31} with $S(0)=P$, for simplicity, and using the definition of $\Phi$, we get that $S(\tau)\succeq \E[\Phi(\tau)^TS(0)\Phi(\tau)]$. Substituting then this expression in \eqref{eq:p32} yields the condition \eqref{eq:expon}. The proof is complete.

\textbf{Proof that \eqref{item:periodic:2} $\Rightarrow$ \eqref{item:periodic:3}.} Assume that \eqref{eq:expon} holds and define $S^*(\tau)=\E[\Phi(\tau)^TS(0)\Phi(\tau)]$, $S(0)=P$. This gives that $-\dot{S}^*(\tau)+A^TS^*(\tau)+S(\tau)A^*+E_c^TS(\tau)^*E_c=0$, hence \eqref{eq:p31} holds. Substituting then the value $S^*(\bar{T})$ in the place of $S(\bar{T})$ in \eqref{eq:p32} yields an expression that is identical to \eqref{eq:expon}. The condition \eqref{eq:p32} then readily follows.

\textbf{Proof that \eqref{item:periodic:3} $\Leftrightarrow$ \eqref{item:periodic:4}.} The equivalence follows from the change of variables $S(\tau)\leftarrow S(\bar{T}-\tau)$.
\end{proof}

The conditions stated in the two last statements are referred to as \emph{clock-dependent conditions} as they explicitly depend on the time $\tau$ (the clock value) elapsed since the last impulse. At each event, the clock is reset to 0 (i.e. $\tau(t_k^+)=0$) and then grows continuously with slope 1 until the next impulse time. Clocks are extensively considered in the analysis of hybrid systems and timed automata; see e.g. \cite{Baier:08,Goebel:09}. Clocks here are used to measure the current dwell-time value and explicitly consider it in the conditions.

It is interesting to note that when the system without any control input is deterministic, i.e. $E_c=0$ and $E_d=0$, then the conditions stated in the previous result reduce to those of \cite{Briat:13d}, emphasizing then their higher degree of generality. However, unlike in the deterministic setting, the condition \eqref{eq:expon} is not directly tractable in this form due to the presence of the expectation operator and random matrices that are difficult to compute. The conditions of the other statements, although stated as infinite-dimensional LMI problems, can be turned into finite-dimensional tractable conditions using several relaxation techniques. This will be discussed in more details in Section \ref{sec:comp}.

\subsection{Ranged dwell-time}

Let us consider now the ranged dwell-time case, that is, the case where $T_k\in[T_{min},T_{max}]$, $0<T_{min}\le T_{max}<\infty$, for all $k\in\mathbb{N}_0$. We then have the following result:
\begin{theorem}\label{th:aperiodic}
  The following statements are equivalent:
  \begin{enumerate}[(i)]
\item \label{item:aperiodic:1} There exists a matrix $P\in\mathbb{S}^n_{\succ0}$ such that
    \begin{equation}\label{eq:expon2}
      \E[J^T\Xi_P(\theta) J+E_d^T\Xi_P(\theta) E_d]-P\prec0
    \end{equation}
    holds for all $\theta\in[T_{min},T_{max}]$ where $\Xi_P(\cdot)$ is defined in \eqref{eq:xi}.
    \item  \label{item:aperiodic:2}There exist a matrix-valued function  $S:[0,T_{max}]\to\mathbb{S}^n$, $S(0)\succ0$, and a scalar $\eps>0$ such that the conditions
    \begin{equation}\label{eq:kdskddmlkdsqdmlkswmlk}
        -\dot{S}(\tau)+A^TS(\tau)+S(\tau)A+E_{c}^TS(\tau)E_{c}\preceq0
    \end{equation}
and
\begin{equation}
  J^TS(\theta)J-S(0)+E_d^TS(\theta)E_d+\eps I\preceq0
\end{equation}
hold for all $\tau\in[0,T_{max}]$ and all $\theta\in[T_{min},T_{max}]$.
  \end{enumerate}
  Moreover, when one of the above equivalent statements holds, then the system \eqref{eq:mainsyst} with $u_c\equiv0$ and $u_d\equiv0$ is mean-square asymptotically stable under ranged dwell-time $(T_{min},T_{max})$, that is, for any sequence of impulse times verifying $T_k\in[T_{min},T_{max}]$ for all $k\in\mathbb{N}$.
%    %
\end{theorem}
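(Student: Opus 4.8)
The plan is to mirror closely the structure of the constant dwell-time result (Theorem~\ref{th:periodic}), but with the dwell-time value $\theta$ now treated as a free parameter ranging over $[T_{min},T_{max}]$. The equivalence between the two statements should go through exactly as in the proof of $\eqref{item:periodic:3}\Leftrightarrow\eqref{item:periodic:2}$ above, only with a uniform-in-$\theta$ argument. First I would prove $\eqref{item:aperiodic:2}\Rightarrow\eqref{item:aperiodic:1}$: integrating \eqref{eq:kdskddmlkdsqdmlkswmlk} from $0$ to any $\tau\in[0,T_{max}]$ with $S(0)=P$ and using the definition of $\Phi$ yields $S(\tau)\succeq\E[\Phi(\tau)^TS(0)\Phi(\tau)]=\E[\Xi_P(\tau)]$; substituting $\tau=\theta$ into the second inequality and discarding $\eps I$ gives \eqref{eq:expon2} for every $\theta\in[T_{min},T_{max}]$. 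For the converse $\eqref{item:aperiodic:1}\Rightarrow\eqref{item:aperiodic:2}$, I would set $S^*(\tau):=\E[\Phi(\tau)^TP\Phi(\tau)]$, which satisfies \eqref{eq:kdskddmlkdsqdmlkswmlk} with equality for all $\tau\in[0,T_{max}]$; substituting $S^*(\theta)$ into the impulse inequality reproduces exactly \eqref{eq:expon2}, and since that strict inequality holds on the \emph{compact} set $[T_{min},T_{max}]$, a single $\eps>0$ can be chosen uniformly. The compactness here is what licenses the uniform choice of $\eps$, and it is the one place where the ranged case differs substantively from the constant case.

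The genuinely new content is the ``Moreover'' claim: that either equivalent statement implies mean-square asymptotic stability for \emph{any} admissible sequence $\{T_k\}$, not merely a periodic one. My plan is to argue via the discrete-time dynamics at the impulse instants. Writing $Q_k:=\E[\Xi_P(\theta)]$-type quadratic forms, the key observation is that for each realized dwell-time $T_k\in[T_{min},T_{max}]$, condition \eqref{eq:expon2} evaluated at $\theta=T_k$ gives, after pre- and post-multiplying by $x(t_k^+)^T$ and $x(t_k^+)$ and taking expectations,
\begin{equation*}
\E[\lVert x(t_{k+1}^+)\rVert_{2,P}^2]-\E[\lVert x(t_k^+)\rVert_{2,P}^2]\le -\eps\,\E[\lVert x(t_k^+)\rVert_2^2].
\end{equation*}
Because $\theta=T_k$ lies in the compact interval on which \eqref{eq:expon2} holds strictly and uniformly, there is a single contraction factor $\rho\in(0,1)$, independent of $k$, with $\E[\lVert x(t_{k+1}^+)\rVert_{2,P}^2]\le\rho\,\E[\lVert x(t_k^+)\rVert_{2,P}^2]$. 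Hence $\E[\lVert x(t_k^+)\rVert_{2,P}^2]\to0$ geometrically, and since $P\succ0$ we get $\E[\lVert x(t_k^+)\rVert_2^2]\to0$; invoking Lemma~\ref{lem:stab} (whose hypotheses $0<T_{min}\le T_k\le T_{max}<\infty$ are exactly the ranged dwell-time assumptions) then upgrades this to mean-square asymptotic stability of the full trajectory.

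The step I expect to be the main obstacle is establishing the \emph{uniform} contraction $\rho<1$ across the entire continuum of possible dwell-times, and propagating it correctly along a sequence of \emph{distinct, arbitrarily varying} $T_k$ values. In the periodic case one exploits a single fixed matrix $\mathcal{M}(\bar T)$ and reads off Schur stability directly; here no single transition matrix governs the dynamics, so I must show that the family $\{\E[\Xi_P(\theta)]\}_{\theta\in[T_{min},T_{max}]}$ being uniformly contractive in the $P$-norm suffices to control the composition of an arbitrary sequence drawn from it. This is where the strict inequality on a compact set is essential: continuity of $\theta\mapsto\E[\Xi_P(\theta)]$ together with strictness yields a uniform gap, hence a common $\rho$, and the geometric decay then follows by simply iterating the one-step bound regardless of which $T_k$ is realized at each stage. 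One subtlety to handle carefully is the independence structure—that the noise on $[t_k,t_{k+1}^+]$ is independent of $x(t_k^+)$—so that the one-step expectation bound factorizes; this is guaranteed by the assumptions on $W_1,W_2$ and $\nu_1,\nu_2$ stated with the model \eqref{eq:mainsyst}.
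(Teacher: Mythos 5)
Your proposal is correct and takes essentially the same route as the paper, whose entire proof of Theorem \ref{th:aperiodic} is the remark that it follows the same lines as Theorem \ref{th:periodic}: you simply carry those lines out, namely the integration/comparison argument for (ii)$\Rightarrow$(i), the explicit certificate $S^*(\tau)=\E[\Phi(\tau)^TP\Phi(\tau)]$ together with compactness of $[T_{min},T_{max}]$ to extract a uniform $\varepsilon$ in (i)$\Rightarrow$(ii), and the uniform one-step contraction iterated and handed to Lemma \ref{lem:stab} for the ``moreover'' claim. The only imprecision---one you inherit verbatim from the paper's own proof of Theorem \ref{th:periodic}---is that \eqref{eq:expon2} encodes the jump-then-flow map $x\mapsto\Phi(\theta)(J+E_d\nu_1(k))x$ and therefore certifies contraction of the pre-impulse sequence $x(t_k)$ rather than of $x(t_k^+)$ as written; restating your one-step inequality for $x(t_k)$ repairs this, and the conclusion is unaffected because $\E[\|x(t_k^+)\|_2^2]\le\lambda_{\max}(J^TJ+E_d^TE_d)\,\E[\|x(t_k)\|_2^2]\to0$, so Lemma \ref{lem:stab} still applies.
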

\begin{proof}
The proof of this result follows from the same lines as the proof of Theorem \ref{th:periodic}.
\end{proof}

\subsection{Minimum dwell-time}

We finally consider the minimum dwell-time case, that is, $T_k\ge \bar{T}$ for all $k\in\mathbb{N}_0$. We then have the following result:
\begin{theorem}\label{th:minDT}
  The following statements are equivalent:
  \begin{enumerate}[(i)]
    \item\label{item:minDT:1} There exists a $P\in\mathbb{S}^n_{\succ0}$ such that the LMIs
    \begin{equation}
        \E[J^T\Xi_P(\theta) J+E_d^T\Xi_P(\theta) E_d]-P\prec0
    \end{equation}
    and
    \begin{equation}
      A^TP+PA+E_c^TPE_c\prec0
    \end{equation}
    hold for all $\theta\ge\bar{T}$ where $\Xi_P(\cdot)$ is defined in \eqref{eq:xi}.
   \item\label{item:minDT:2} There exists a $P\in\mathbb{S}^n_{\succ0}$ such that the LMIs
    \begin{equation}\label{eqmldsm:kdslkdkl}
        \E[J^T\Xi_P(\bar{T}) J+E_d^T\Xi_P(\bar{T}) E_d]-P\prec0
    \end{equation}
    and
    \begin{equation}\label{eq:dskdksqldkkmlkml}
      A^TP+PA+E_c^TPE_c\prec0
    \end{equation}
    hold  where $\Xi_P(\cdot)$ is defined in \eqref{eq:xi}.
    \item\label{item:minDT:3} There exist a matrix-valued function  $S:\mathbb{R}_{\ge0}\to\mathbb{S}^n$, $S(\bar{T})\succ0$, $S(\bar{T}+s)=S(\bar{T})$, $s\ge0$, and a scalar $\eps>0$ such that the conditions
    \begin{equation}\label{eq:dksldk670}
  A^TS(\bar{T})+S(\bar{T})A+E_c^TS(\bar{T})E_c\prec0,
\end{equation}
    \begin{equation}\label{eq:dksldk67}
        \dot{S}(\tau)+A^TS(\tau)+S(\tau)A+E_{c}^TS(\tau)E_{c}\preceq0
    \end{equation}
and
\begin{equation}\label{eq:dksldk672}
  J^TS(0)J-S(\bar{T})+E_d^TS(0)E_d+\eps I\preceq0
\end{equation}
hold for all $\tau\in[0,\bar{T}]$.
  \end{enumerate}
      Moreover, when one of the above equivalent statements holds, then the system \eqref{eq:mainsyst} with $u_c\equiv0$ and $u_d\equiv0$ is mean-square asymptotically stable under minimum dwell-time $\bar{T}$, that is, for any sequence of impulse times verifying $T_k\ge\bar{T}$ for all $k\in\mathbb{N}$.
\end{theorem}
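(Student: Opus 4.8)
The plan is to organize all three equivalences around a single monotonicity property of the map $\theta\mapsto\mathcal{P}_\theta(P):=\E[\Xi_P(\theta)]=\E[\Phi(\theta)^TP\Phi(\theta)]$. First I would apply It\^o's formula to $\Phi(\theta)^TP\Phi(\theta)$ along the fundamental solution \eqref{eq:dPhi}, exactly as in the proof of Theorem \ref{th:periodic}, to obtain
\begin{equation*}
\tfrac{d}{d\theta}\mathcal{P}_\theta(P)=\E\!\left[\Phi(\theta)^T(A^TP+PA+E_c^TPE_c)\Phi(\theta)\right].
\end{equation*}
Because $M\preceq0$ forces $\Phi(\theta)^TM\Phi(\theta)\preceq0$ pointwise, the continuous-time inequality \eqref{eq:dskdksqldkkmlkml} makes the right-hand side negative semidefinite, so $\mathcal{P}_\theta(P)$ is non-increasing in the L\"owner order and $\mathcal{P}_\theta(P)\preceq\mathcal{P}_{\bar T}(P)$ for all $\theta\ge\bar T$. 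This is the device that lets a single condition at the clock value $\bar T$, together with \eqref{eq:dskdksqldkkmlkml}, control the whole family of conditions indexed by $\theta\ge\bar T$.

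With this in hand, \eqref{item:minDT:1}$\Rightarrow$\eqref{item:minDT:2} is immediate by specializing to $\theta=\bar T$. For \eqref{item:minDT:2}$\Rightarrow$\eqref{item:minDT:1} I would combine the monotonicity with the fact that $Z\mapsto J^TZJ+E_d^TZE_d$ preserves the L\"owner order: for every $\theta\ge\bar T$,
\begin{equation*}
J^T\mathcal{P}_\theta(P)J+E_d^T\mathcal{P}_\theta(P)E_d\preceq J^T\mathcal{P}_{\bar T}(P)J+E_d^T\mathcal{P}_{\bar T}(P)E_d\prec P,
\end{equation*}
the last strict inequality being \eqref{eqmldsm:kdslkdkl}; since $J^T\mathcal{P}_\theta(P)J+E_d^T\mathcal{P}_\theta(P)E_d=\E[J^T\Xi_P(\theta)J+E_d^T\Xi_P(\theta)E_d]$ and \eqref{eq:dskdksqldkkmlkml} is carried verbatim, this gives \eqref{item:minDT:1}. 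Note that only the non-strict monotonicity is needed here, so no invertibility of $\Phi$ must be invoked.

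For \eqref{item:minDT:2}$\Leftrightarrow$\eqref{item:minDT:3} I would reuse the clock-variable construction of Theorem \ref{th:periodic}. Given \eqref{item:minDT:2}, set $P:=S(\bar T)$ and define $S(\tau):=\mathcal{P}_{\bar T-\tau}(P)$ on $[0,\bar T]$, extended by $S(\bar T+s)=S(\bar T)$ for $s\ge0$; then the identity $\dot S(\tau)+A^TS(\tau)+S(\tau)A+E_c^TS(\tau)E_c=0$ yields \eqref{eq:dksldk67}, \eqref{eq:dksldk670} is precisely \eqref{eq:dskdksqldkkmlkml}, and because $S(0)=\mathcal{P}_{\bar T}(P)$ the strictness in \eqref{eqmldsm:kdslkdkl} leaves room for a small $\eps>0$ in \eqref{eq:dksldk672}. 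Conversely, given \eqref{item:minDT:3}, I would again take $P:=S(\bar T)$; integrating the flow inequality \eqref{eq:dksldk67} and using the positivity of the propagator $\mathcal{P}_\bullet$ (the comparison argument already used for \eqref{item:periodic:3}$\Rightarrow$\eqref{item:periodic:2} in Theorem \ref{th:periodic}) gives $S(0)\succeq\mathcal{P}_{\bar T}(S(\bar T))$, and substituting this monotonically into \eqref{eq:dksldk672} produces \eqref{eqmldsm:kdslkdkl}, while \eqref{eq:dksldk670} is \eqref{eq:dskdksqldkkmlkml}.

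Finally, for the mean-square stability claim I would use $V(x)=x^TPx$ with $P$ from \eqref{item:minDT:2}. On every flow interval $\tfrac{d}{dt}\E[V(x(t))]=\E[x(t)^T(A^TP+PA+E_c^TPE_c)x(t)]\le0$, so $\E[V]$ cannot increase between impulses; this is what tames the possibly unbounded dwell-times. Sampling at the impulse instants and taking expectations over $\nu_1(k)$ and the Wiener increments, the jump-then-flow computation gives $\E[\|x(t_{k+1})\|_{2,P}^2]=\E[x(t_k)^T(J^T\mathcal{P}_{T_k}(P)J+E_d^T\mathcal{P}_{T_k}(P)E_d)x(t_k)]$, and by the displayed bound above there is a fixed $\rho\in(0,1)$ with $J^T\mathcal{P}_{T_k}(P)J+E_d^T\mathcal{P}_{T_k}(P)E_d\preceq\rho P$ for all $T_k\ge\bar T$. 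Hence $\E[\|x(t_k)\|_{2,P}^2]\to0$ geometrically, and combined with the non-increase of $\E[V]$ during flow (so that $\E[\|x(t)\|_2^2]$ is controlled by its values at the impulse times) we conclude $\E[\|x(t)\|_2^2]\to0$. The main obstacle I anticipate is not any individual implication but securing the \emph{uniformity} of the contraction rate $\rho$ over the unbounded range $\theta\ge\bar T$; this is exactly what the monotonicity $\mathcal{P}_\theta(P)\preceq\mathcal{P}_{\bar T}(P)$ delivers. A secondary technical point is the justification of differentiating and passing the expectation through $\mathcal{P}_\theta$, i.e. the integrability and regularity of the fundamental solution $\Phi$.
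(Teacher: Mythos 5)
Your proof is correct and, as far as the three equivalences are concerned, it is essentially the paper's own argument: your L\"owner monotonicity of $\theta\mapsto\E[\Xi_P(\theta)]$ under \eqref{eq:dskdksqldkkmlkml} is exactly the paper's device of setting $f_{\bar T}(\theta)=\E[\Phi(\bar T+\theta)^TP\Phi(\bar T+\theta)]$ and showing $df_{\bar T}/d\theta\preceq 0$ (you state the monotonicity globally, the paper argues for small $\theta$ and then shifts by $\mu\ge0$ --- same substance), composed with the order-preserving map $Z\mapsto J^TZJ+E_d^TZE_d$; likewise your explicit constructions for \eqref{item:minDT:2}$\Leftrightarrow$\eqref{item:minDT:3} ($S(\tau)=\E[\Phi(\bar T-\tau)^TP\Phi(\bar T-\tau)]$ in one direction, the comparison bound $S(0)\succeq\E[\Phi(\bar T)^TS(\bar T)\Phi(\bar T)]$ in the other) are precisely what the paper imports wholesale by citing Theorem \ref{th:periodic}. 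Where you genuinely add value is the final stability assertion: the paper's proof establishes only the equivalences and leaves the ``Moreover'' part implicit, and it cannot be obtained by simply invoking Lemma \ref{lem:stab} as in the constant and ranged dwell-time cases, since that lemma assumes $T_k\le T_{max}<\infty$ while here the dwell-times may be unbounded. Your Lyapunov argument fills this gap correctly: you identify that the condition of statement \eqref{item:minDT:1} is a contraction for the pre-jump samples $\E[\|x(t_k)\|_{2,P}^2]$ (your jump-then-flow bookkeeping is the order that matches the stated condition), the monotonicity supplies a contraction factor $\rho\in(0,1)$ that is uniform over all $\theta\ge\bar T$, and the flow LMI \eqref{eq:dskdksqldkkmlkml} --- included in statement \eqref{item:minDT:2} exactly for this purpose --- makes $\E[x(t)^TPx(t)]$ non-increasing between impulses, so arbitrarily long inter-impulse intervals cause no growth. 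The two technical points you flag (differentiating under $\E$, and the identity $\dot S+A^TS+SA+E_c^TSE_c=0$ for the propagated quadratic form, which rests on the semigroup/commutation property of the propagator) are handled at the same level of rigor as in the paper itself, so nothing is missing relative to the paper's standard.
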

\begin{proof}
  The proof that \eqref{item:minDT:2} is equivalent to \eqref{item:minDT:3} follows from Theorem \ref{th:periodic}. The proof that \eqref{item:minDT:1} implies \eqref{item:minDT:2} is also immediate. Let us then focus on the reverse implication. Define first the function $f_{\bar{T}}(\theta):=\E[\Phi(\bar{T}+\theta)^TP\Phi(\bar{T}+\theta)]$. Then, we have that
  \begin{equation}
    \dfrac{df_{\bar{T}}}{d\theta}=\E[\Phi(\bar{T}+\theta)^T[\He[PA]+E_c^TPE_c]\Phi(\bar{T}+\theta)].
  \end{equation}
  Using now \eqref{eq:dskdksqldkkmlkml}, we can conclude that $\frac{df_{\bar{T}}}{d\theta}\preceq0$ for all $\theta\ge0$. This, in turn, implies that
  \begin{equation}
  \begin{array}{l}
         \hspace{-5mm}\E[J^T\Xi_P(\bar{T}+\theta) J+E_d^T\Xi_P(\bar{T}+\theta) E_d]\\
         \hspace{15mm}\preceq\E[J^T\Xi_P(\bar{T}) J+E_d^T\Xi_P(\bar{T}) E_d]
  \end{array}
  \end{equation}
  for any sufficiently small $\theta\ge0$. Noting finally that the above inequality also holds when substituting $\bar{T}$ by $\bar{T}+\mu$ for any $\mu\ge0$ we get that
   \begin{equation}
   \begin{array}{l}
          \hspace{-5mm}\E[J^T\Xi_P(\bar{T}+\theta') J+E_d^T\Xi_P(\bar{T}+\theta') E_d]\\
          \hspace{15mm}\preceq\E[J^T\Xi_P(\bar{T}) J+E_d^T\Xi_P(\bar{T}) E_d]
   \end{array}
  \end{equation}
  for all $\theta':=\theta+\mu\ge0$. Using finally \eqref{eqmldsm:kdslkdkl} yields the result. The proof is complete.
\end{proof}

\subsection{Computational aspects}\label{sec:comp}

The conditions of the theorems stated in the previous sections are infinite-dimensional LMI feasibility problems which cannot be verified directly. In what follows, we describe two relaxation methods turning the original untractable conditions into tractable ones.  Note that even though we only provide these relaxations for Theorem \ref{th:periodic}, \eqref{item:periodic:3}, similar ones can be obtained for the conditions of Theorem \ref{th:aperiodic} and Theorem \ref{th:minDT}.

\subsubsection{Piecewise linear approach}

The first method, referred to as the piecewise linear approximation, proposes to impose a piecewise linear structure to the general matrix-valued functions involved in the conditions; see e.g. \cite{Allerhand:11}. The following result states the conditions that approximate those of Theorem \ref{th:periodic}, \eqref{item:periodic:3}:
\begin{proposition}\label{prop:discretization}
  Let $N\in\mathbb{N}$. The following statements are equivalent:
  \begin{enumerate}
    \item The conditions of Theorem \ref{th:periodic}, \eqref{item:periodic:3} hold with the piecewise-linear matrix-valued function $S(\tau)$ given by
        \begin{equation}\label{eq:pieceS}
          S(\tau)=\dfrac{S_{i+1}-S_i}{\bar{T}/N}\left(\tau-\dfrac{i\bar{T}}{N}\right)+S_i
        \end{equation}
        for $\tau\in[i\bar{T}/N,(i+1)\bar{T}/N]$, $S_i\in\mathbb{S}^n$, $i=0,\ldots,N-1$.
    \item There exist matrices $S_i\in\mathbb{S}^n$, $i=1,\ldots,N$, $S_0\succ0$, and a scalar $\eps>0$ such that the LMIs
  \begin{equation}\label{eq:pieceS1}
        -\dfrac{S_{i+1}-S_i}{\bar{T}/N}+A^TS_i+S_iA+E_c^TS_iE_c\preceq0
    \end{equation}
    \begin{equation}\label{eq:pieceS2}
           -\dfrac{S_{i+1}-S_i}{\bar{T}/N}+A^TS_{i+1}+S_{i+1}A+E_c^TS_{i+1}E_c\preceq0
    \end{equation}
and
\begin{equation}\label{eq:pieceS3}
  J^TS_NJ-S_0+E_d^TS_NE_d+\eps I\preceq0
\end{equation}
hold for all $i=0,\ldots,N-1$.
  \end{enumerate}
  When one of the above statements holds, then the conditions of Theorem \ref{th:periodic}, \eqref{item:periodic:3} hold with the computed piecewise-linear matrix $S(\tau)$.% and the system \eqref{eq:mainsyst} is mean-square asymptotically stable with constant dwell-time $\bar{T}$.
\end{proposition}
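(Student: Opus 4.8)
The plan is to substitute the piecewise-linear ansatz \eqref{eq:pieceS} into the conditions of Theorem \ref{th:periodic}, \eqref{item:periodic:3} and to reduce the resulting semi-infinite matrix inequality, which must hold for all $\tau\in[0,\bar{T}]$, to a finite family of inequalities anchored at the mesh points $\tau=i\bar{T}/N$. The essential observation is that on each subinterval $[i\bar{T}/N,(i+1)\bar{T}/N]$ the function $S(\tau)$ is affine in $\tau$ and its derivative $\dot{S}(\tau)=(S_{i+1}-S_i)/(\bar{T}/N)$ is constant; consequently the left-hand side of \eqref{eq:p31}, namely $-\dot{S}(\tau)+A^TS(\tau)+S(\tau)A+E_c^TS(\tau)E_c$, is an affine matrix-valued function of $\tau$ on that subinterval. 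The crux is then the elementary fact that an affine matrix-valued function $\Psi(\tau)=M_0+\tau M_1$ satisfies $\Psi(\tau)\preceq0$ for all $\tau$ in a compact interval $[a,b]$ if and only if $\Psi(a)\preceq0$ and $\Psi(b)\preceq0$: the direct implication is trivial, and the converse follows by writing any $\tau\in[a,b]$ as $\tau=\lambda a+(1-\lambda)b$ with $\lambda\in[0,1]$ and noting that $\Psi(\tau)=\lambda\Psi(a)+(1-\lambda)\Psi(b)$ is a convex combination of negative semidefinite matrices.

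With this reduction in hand, I would apply the endpoint test on each subinterval $i=0,\ldots,N-1$. Evaluating the affine function at the left endpoint $\tau=i\bar{T}/N$, where $S(\tau)=S_i$, produces exactly \eqref{eq:pieceS1}, while evaluating it at the right endpoint $\tau=(i+1)\bar{T}/N$, where $S(\tau)=S_{i+1}$ and the slope is unchanged, produces exactly \eqref{eq:pieceS2}. Ranging over all subintervals therefore yields the full family \eqref{eq:pieceS1}--\eqref{eq:pieceS2}. The boundary condition \eqref{eq:p32} transfers verbatim: by continuity of the ansatz \eqref{eq:pieceS} one has $S(0)=S_0$ and $S(\bar{T})=S_N$, so \eqref{eq:p32} becomes \eqref{eq:pieceS3}, and the positivity requirement $S(0)\succ0$ becomes $S_0\succ0$. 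Since every step is reversible, this establishes both directions of the equivalence at once.

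The point requiring the most care is the behaviour at the interior mesh points, where the piecewise-linear $S$ is continuous but not differentiable, so that $\dot{S}$ is only defined there as a one-sided derivative. I would handle this by interpreting \eqref{eq:p31} on the open subintervals (equivalently, via one-sided derivatives) and then observing that the endpoint reduction above automatically accounts for both sides: at an interior node $\tau=i\bar{T}/N$ the right-derivative test is the left-endpoint condition \eqref{eq:pieceS1} of subinterval $i$, whereas the left-derivative test is the right-endpoint condition \eqref{eq:pieceS2} of subinterval $i-1$, and these two (generally distinct) inequalities are both retained in the finite family. Finally, the ``moreover'' claim is immediate: if the finite LMIs \eqref{eq:pieceS1}--\eqref{eq:pieceS3} hold, the same affine-endpoint argument run in reverse shows that the piecewise-linear $S(\tau)$ defined by \eqref{eq:pieceS} satisfies \eqref{eq:p31}--\eqref{eq:p32} for all $\tau\in[0,\bar{T}]$, i.e. it is an admissible certificate for Theorem \ref{th:periodic}, \eqref{item:periodic:3}.
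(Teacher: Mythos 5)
Your proposal is correct and follows essentially the same route as the paper: substitute the piecewise-linear ansatz, observe that the resulting inequality is affine in $\tau$ on each subinterval, and invoke the lossless vertex (convexity) argument to reduce to the endpoint LMIs \eqref{eq:pieceS1}--\eqref{eq:pieceS2}, with \eqref{eq:p32} passing verbatim to \eqref{eq:pieceS3}. Your extra remark on one-sided derivatives at the interior mesh points is a careful touch the paper leaves implicit, but it does not change the argument.
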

\begin{proof}
  The proof follows from a convexity argument. By substituting the piecewise-linear expression of $S(\tau)$ given by \eqref{eq:pieceS} into the conditions \eqref{eq:p32} and \eqref{eq:p31} gives \eqref{eq:pieceS3} and
  \begin{equation}
      -\dfrac{S_{i+1}-S_i}{\bar{T}/N}+\He[S(\tau)A]+E_c^TS(\tau)E_c\preceq0,
  \end{equation}
  respectively. Noting then that the above LMI is affine in $\tau$, then for each $i=0,\ldots,N-1$, it is necessary and sufficient to check the above LMI at the vertices of the interval $[i\bar{T}/N,(i+1)\bar{T}/N]$, that is, at the values $i\bar{T}/N$ and $(i+1)\bar{T}/N$. The equivalence follows from the losslessness of the manipulations.
\end{proof}

As the conditions stated in the above result are finite-dimensional, they can be solved using standard SDP solvers such as SeDuMi \cite{Sturm:01a}. Note, moreover, than using similar arguments as in \cite{Xiang:15a}, we can prove that if the conditions of Theorem \ref{th:periodic}, \eqref{item:periodic:3} are feasible, then there exists an integer $N^*$ such that the conditions of Proposition \ref{prop:discretization}, (b) are feasible for all $N\ge N^*$ .

\subsubsection{Sum of squares programs}

Another possible relaxation relies on the use of sum of squares programming \cite{Parrilo:00,sostools3} where we impose a polynomial structure with fixed degree to the matrix-valued functions. The following result states conditions that approximate those of Theorem \ref{th:periodic}, \eqref{item:periodic:3} and that can be easily checked using SOSTOOLS \cite{sostools3} and the semidefinite programming solver SeDuMi \cite{Sturm:01a}:
\begin{proposition}\label{prop:sos1}
    Let $\eps,\nu,\bar{T}>0$ be given and  assume that the following sum of squares program
    \begin{equation*}
  \begin{array}{l}
    \textnormal{Find polynomial matrices }S,\Gamma:\mathbb{R}\to\mathbb{S}^n\textnormal{\ such that}\\
     S(0)-\nu I_n\succeq0\\
     \Gamma(\tau)\ \textnormal{is SOS}\\
     \dot{S}(\tau)-\He[S(\tau)A]-E_c^TS(\tau)E_c-\Gamma(\tau)\tau(\bar{T}-\tau)\textnormal{ is SOS}\\
    S(0)-J^TS(\bar{T})J-E_d^TS(\bar{T})E_d-\eps I\succeq0
  \end{array}
\end{equation*}
is feasible. Then the conditions of Theorem \ref{th:periodic}, \eqref{item:periodic:3} hold with the computed polynomial matrix $S(\tau)$ and the system \eqref{eq:mainsyst} is mean-square asymptotically stable under constant dwell-time $\bar{T}$.
\end{proposition}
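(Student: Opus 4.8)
The plan is to show that feasibility of the sum of squares program forces each of the three requirements listed in Theorem~\ref{th:periodic}, \eqref{item:periodic:3}, after which mean-square asymptotic stability follows directly from the equivalence \eqref{item:periodic:3} $\Leftrightarrow$ \eqref{item:periodic:1} already established in that theorem. The backbone of the argument is the elementary fact that a matrix-valued polynomial which is a sum of squares is positive semidefinite for \emph{every} real value of its argument; I would record this once and then use it for both SOS constraints.

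First I would dispose of the two easy conditions. The constraint $S(0)-\nu I_n\succeq0$ with $\nu>0$ immediately gives $S(0)\succeq\nu I_n\succ0$, which is the positivity requirement on $S(0)$ (note that the theorem asks nothing about $S(\tau)$ for $\tau>0$, so no further sign control on $S$ is needed). The last constraint $S(0)-J^TS(\bar{T})J-E_d^TS(\bar{T})E_d-\eps I\succeq0$ is, after moving terms to the other side, exactly \eqref{eq:p32}; here no localization is required since this is a single matrix inequality relating only the endpoint values $S(0)$ and $S(\bar{T})$.

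The crux, and the only step I expect to be nontrivial, is the third constraint. By the SOS property, the matrix polynomial $\dot{S}(\tau)-\He[S(\tau)A]-E_c^TS(\tau)E_c-\Gamma(\tau)\tau(\bar{T}-\tau)$ is positive semidefinite for all real $\tau$, and $\Gamma(\tau)$ is itself positive semidefinite for all real $\tau$. I would then restrict attention to $\tau\in[0,\bar{T}]$, on which the scalar multiplier $\tau(\bar{T}-\tau)$ is nonnegative, so that $\Gamma(\tau)\tau(\bar{T}-\tau)\succeq0$ there. Combining the two facts yields $\dot{S}(\tau)-\He[S(\tau)A]-E_c^TS(\tau)E_c\succeq0$ on $[0,\bar{T}]$, which is precisely \eqref{eq:p31} once rewritten with the signs of the theorem. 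This is the standard matrix Positivstellensatz (S-procedure) device: the multiplier $\tau(\bar{T}-\tau)$ encodes membership in $[0,\bar{T}]$, and the SOS multiplier $\Gamma$ is exactly what lets a single global SOS certificate imply a semidefinite inequality that is only required to hold on the compact interval.

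With \eqref{eq:p31}, \eqref{eq:p32} and $S(0)\succ0$ all verified by the computed polynomial $S(\tau)$, the conditions of Theorem~\ref{th:periodic}, \eqref{item:periodic:3} hold, and the equivalence proved there delivers mean-square asymptotic stability of \eqref{eq:mainsyst} under constant dwell-time $\bar{T}$. The one point worth flagging is that the implication is strictly one-directional: feasibility of the SOS program is sufficient but a priori conservative, since it fixes a polynomial degree and relaxes the interval constraint through a specific multiplier structure, so no converse is being claimed.
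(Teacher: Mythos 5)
Your proof is correct. The paper in fact states Proposition~\ref{prop:sos1} without proof, treating precisely your argument as standard: the three ingredients you use --- that a matrix SOS polynomial is positive semidefinite for all real $\tau$, that the multiplier $\Gamma(\tau)\tau(\bar{T}-\tau)$ localizes the third constraint to $[0,\bar{T}]$ (so that \eqref{eq:p31} follows there), and that the two endpoint constraints directly yield $S(0)\succeq\nu I\succ0$ and \eqref{eq:p32} --- constitute exactly the intended reasoning, after which Theorem~\ref{th:periodic} delivers stability, with the one-directional (sufficient-only) nature of the relaxation correctly flagged.
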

%The SOS conditions stated in the above result can be checked using SOS programming softwares such as

%\blue{Regarding the conservatism, it can be shown using the same arguments as in \cite{Briat:14f} that if the conditions of Theorem \ref{th:periodic}, \eqref{item:periodic:3} are feasible for some $S(\tau)$, then there exists an integer $d$ such that the above SOS program is feasible with polynomials of degree $2d$. This is a consequence of the density of the sum of squares matrix polynomials in the cone of positive semidefinite matrix-values function and the fact that for univariate matrix-valued polynomial, semidefinite positivity is equivalent to the existence of a sum of square decomposition; see e.g. \cite{Choi:80,Scherer:06,Chesi:10b,Briat:14f}. Finally, it is important to stress that SOS conditions are in general more tractable than those obtained using the piecewise linear approximation that often require a large discretization order; see \cite{Briat:14f}.}

Regarding the conservatism, it can be shown using the same arguments as in \cite{Briat:14f} that if the conditions of Theorem \ref{th:periodic}, \eqref{item:periodic:3} are feasible, then there exists a sufficiently large integer $d$ such that the above SOS program is feasible for some polynomials $S,\Gamma$ of degree at least $2d$. Finally, it is important to stress that SOS conditions are in general more tractable than those obtained using the piecewise linear approximation that often require a large discretization order; see \cite{Briat:14f,Briat:15f}.

\subsection{Application to switched systems}

Interestingly, the results developed in this section also applies to linear stochastic switched systems. To emphasize this, let us consider the stochastic switched system
\begin{equation}\label{eq:switched}
  \begin{array}{lcl}
    dy(t)&=&G_{\sigma(t)}y(t)dt+H_{\sigma(t)}y(t)dW_1(t),\ y(0)=y_0
  \end{array}
\end{equation}
where $y,y_0\in\mathbb{R}^n$ and $W_1(t)$ are the state of the system, the initial condition and the standard Wiener process, respectively. The switching signal $\sigma:\mathbb{R}_{\ge0}\to\{1,\ldots,N\}$, for some finite $N\in\mathbb{N}$, is piecewise constant and describes the evolution of the mode of the switched system. This system can be reformulated in the form \eqref{eq:mainsyst} with the matrices
\begin{equation}\label{eq:swicthed matrices}
  A=\diag_{k=1}^N[G_k],\ E_c=\diag_{k=1}^N(H_k)\ \textnormal{and}\ J_{ij}=(e_ie_j^T)\otimes I_n,
\end{equation}
for all $i,j=1,\ldots,\mu$, $i\ne j$, where $\{e_i\}_{i=1,\ldots,N}$ is the standard basis of $\mathbb{R}^N$. Note, however, that we have here multiple jump maps $J_{ij}$ (actually reset maps here). It is immediate to see that the derived stability conditions can be straightforwardly extended to address this case, and we get the following result adapted from Theorem \ref{th:minDT}:

\begin{corollary}\label{th:minDTsw}
Assume that there exists a block-diagonal matrix-valued function $R=\diag_{i=1}^N(R_i)$, $R_i:[0,\bar{T}]\to\mathbb{S}^n$, $R(\bar{T})\succ0$, such that the conditions
  \begin{equation}
   G_i^TR_i(\bar{T})+R_i(\bar{T})G_i+H_i^TR_i(\bar{T})H_i\prec0,
\end{equation}
 \begin{equation}
        \dot{R}_i(\tau)+G_i^TR_i(\tau)+R_i(\tau)G_i+H_i^TR_i(\tau)H_i\preceq0
    \end{equation}
    and
    \begin{equation}
  R_i(0)-R_j(\bar{T})+\eps I\preceq0
\end{equation}
hold  for all $\tau\in[0,\bar{T}]$ and for all $i,j=1,\ldots,N$, $i\ne j$.

Then, the linear stochastic switched system \eqref{eq:switched} is mean-square asymptotically stable under minimum dwell-time $\bar{T}$.
\end{corollary}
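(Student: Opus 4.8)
The plan is to keep the reformulation of the switched system \eqref{eq:switched} as the impulsive system \eqref{eq:mainsyst} supplied by the block-diagonal lifting \eqref{eq:swicthed matrices}, to take the lifted Lyapunov certificate block-diagonal, $S(\tau)=\diag_{i=1}^N(R_i(\tau))$ with $E_d=0$, and to replay the clock-dependent argument behind Theorem \ref{th:minDT}, \eqref{item:minDT:3} now in the presence of the several reset maps $J_{ij}$. The first thing I would record is that the lifted state never leaves a single block: starting from $x(0)$ equal to $y_0$ in the block indexed by $\sigma(0)$ and zero elsewhere, the block-diagonal drift $A$ and diffusion $E_c$ keep every inactive block identically zero along each flow interval, while each reset $J_{ij}=(e_ie_j^T)\otimes I_n$ merely transports the content of block $j$ into block $i$. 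Hence along trajectories $x(t)^TS(\tau)x(t)=y(t)^TR_{\sigma(t)}(\tau)y(t)$, so the off-diagonal blocks of $S$ are irrelevant.

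I would then work directly with the mode- and clock-dependent functional $V(t)=\E[y(t)^TR_{\sigma(t)}(\tau(t))y(t)]$, where $\tau$ is the time elapsed since the last switch, reset to $0$ at each switching instant, and where $R_i(\tau):=R_i(\bar T)$ is frozen for $\tau\ge\bar T$. On a flow interval in mode $i$, It\^o's formula for $dy=G_iy\,dt+H_iy\,dW_1$ gives $\tfrac{d}{dt}V=\E[y^T(\dot R_i(\tau)+G_i^TR_i(\tau)+R_i(\tau)G_i+H_i^TR_i(\tau)H_i)y]$, the last term being the It\^o correction from the multiplicative noise. The second inequality of the corollary makes this $\le 0$ for $\tau\in[0,\bar T]$, and for $\tau\ge\bar T$ one has $\dot R_i=0$, so the first inequality makes it $\prec 0$; thus $V$ is nonincreasing on every flow interval. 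At a switch from mode $j$ to mode $i$ at $t_{k+1}$, the minimum-dwell-time hypothesis forces the pre-switch clock value to be at least $\bar T$, so the pre-switch value is $\E[y^TR_j(\bar T)y]$; since the state is continuous across the switch while only the mode and the clock update, the post-switch value is $\E[y^TR_i(0)y]$, and the third inequality gives $V(t_{k+1}^+)-V(t_{k+1})\le-\eps\,\E[||y(t_{k+1})||_2^2]$.

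To conclude I would monitor $V$ at the pre-switch instants, $W_k:=\E[y(t_k)^TR_{m_k}(\bar T)y(t_k)]$ with $m_k$ the mode active just before $t_k$. Because $R(\bar T)\succ0$, every block satisfies $R_i(\bar T)\succeq\lambda I$ with $\lambda:=\min_i\lambda_{\min}(R_i(\bar T))>0$, so $W_k\ge\lambda\,\E[||y(t_k)||_2^2]\ge0$; chaining the flow monotonicity with the strict jump decrease yields $W_{k+1}\le W_k-\eps\,\E[||y(t_k)||_2^2]$. Hence $\{W_k\}$ is nonincreasing and bounded below, $\sum_k\E[||y(t_k)||_2^2]<\infty$, and $\E[||y(t_k)||_2^2]\to0$. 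A Gr\"onwall bound on each flow interval, with the strict frozen-mode decay controlling the possibly unbounded intervals allowed under minimum dwell-time, then upgrades this to $\E[||y(t)||_2^2]\to0$, exactly as in Lemma \ref{lem:stab} and Remark \ref{rem:exp}.

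The main obstacle I anticipate is neither the It\^o computation nor the block reduction, both routine, but the fact that positive definiteness is imposed only at $\tau=\bar T$: $V$ is not norm-equivalent on $[0,\bar T)$ and need not even be nonnegative there. I sidestep this by sampling the Lyapunov sequence precisely at the pre-switch instants, where $R(\bar T)\succ0$ forces $W_k\ge0$, so that the telescoping summation never uses the sign of $R_i(\tau)$ for $\tau<\bar T$. A secondary point is bookkeeping the several reset maps: I would check that the naive lifted jump inequality $J_{ij}^TS(0)J_{ij}-S(\bar T)+\eps I\preceq0$ splits into the stated diagonal condition $R_i(0)-R_j(\bar T)+\eps I\preceq0$ together with spurious off-diagonal constraints $R_k(\bar T)\succeq\eps I$ that are automatically inactive---both because the lifted state never populates those blocks and because $R(\bar T)\succ0$ lets one shrink $\eps$ to meet them anyway.
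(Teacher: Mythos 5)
Your proof is correct, and although it starts from exactly the same place as the paper --- the lifting of \eqref{eq:switched} into the form \eqref{eq:mainsyst} via \eqref{eq:swicthed matrices}, together with a block-diagonal certificate $S(\tau)=\diag_{i=1}^N(R_i(\tau))$ --- it then takes a genuinely more self-contained route. The paper's own proof is three lines: substitute the block-diagonal model and certificate into the conditions of Theorem \ref{th:minDT} (tacitly extended to the family of reset maps $J_{ij}$) and ``expand''. You instead replay the Lyapunov machinery behind Theorem \ref{th:minDT} directly on the switched system: It\^{o}'s formula along flows, the jump inequality at switches, sampling of the functional at pre-switch instants where the clock exceeds $\bar{T}$, and the strict frozen-mode decay to cope with arbitrarily long dwell intervals and with the case where switching eventually stops. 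What your route buys is precisely the two details the paper's proof glosses over. First, the expansion is not as immediate as the paper suggests: with $S=\diag_{k}(R_k)$ one gets $J_{ij}^TS(0)J_{ij}=(e_je_j^T)\otimes R_i(0)$, so the lifted jump LMI $J_{ij}^TS(0)J_{ij}-S(\bar{T})+\eps I\preceq0$ splits into the stated condition $R_i(0)-R_j(\bar{T})+\eps I\preceq0$ \emph{plus} the off-diagonal-block constraints $R_k(\bar{T})\succeq\eps I$, $k\ne j$, which do not appear in the corollary; your observation that these are recovered by shrinking $\eps$ (finitely many blocks, $R(\bar{T})\succ0$) is exactly the bookkeeping needed to make the substitution argument lossless. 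Second, your direct argument legitimately handles the stability assertion under unbounded $T_k$, which matters because Lemma \ref{lem:stab} is stated only for $T_k\le T_{max}<\infty$; the strict inequality $G_i^TR_i(\bar{T})+R_i(\bar{T})G_i+H_i^TR_i(\bar{T})H_i\prec0$ is what substitutes for it, and you invoke it in the right place, as does your choice to monitor the functional only at pre-switch instants, where $R(\bar{T})\succ0$ guarantees nonnegativity even though the certificate is sign-indefinite for $\tau<\bar{T}$. The price is duplication: your argument re-proves what Theorems \ref{th:periodic} and \ref{th:minDT} already establish, so the paper's citation-based proof is much shorter --- at the cost of leaving those two gaps for the reader to fill.
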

\begin{proof}
  The proof is based on the reformulation \eqref{eq:mainsyst}-\eqref{eq:swicthed matrices}. Noting then the system is block-diagonal, then it is enough to choose a matrix-valued function $R(\tau)$ that is also block-diagonal. Substituting the model in the conditions of Theorem \ref{th:minDT} and expanding them yield the result.
\end{proof}

 It is interesting to note that the above condition, although formulated in a more compact form, include those obtained in \cite{Shaked:14}. If we indeed apply now the piecewise linear approximation, then we get exactly the conditions of \cite[Theorem 1]{Shaked:14}.

\subsection{Examples}

We now apply some of the previously developed results to some academic examples.
\begin{example}
  Let us consider the system \eqref{eq:mainsyst} with the matrices \cite{Briat:11l,Briat:12h,Briat:13d}
  \begin{equation}\label{eq:ex1}
    A=\begin{bmatrix}
      -1 & 0\\
      1 & -2
    \end{bmatrix},\ J=\begin{bmatrix}
      2 & 1\\
      1 & 3
    \end{bmatrix},\ E_c=\kappa I_2, E_d=\delta I_2,
  \end{equation}
  $B_c^1=B_c^2=B_d^1=B_d^2=0$ (i.e. no control input) for some scalars $\kappa,\delta\ge0$. We then choose several values for the parameters $\delta$ and $\kappa$. The case $(\kappa,\delta)=(0,0)$ corresponds to the deterministic case of \cite{Briat:13d}. For each of these values for the parameters, we solve the sum of squares program of Proposition \ref{prop:sos1} for the constant dwell-time and the minimum dwell-time cases with matrix polynomials of degree 6. Note that to adapt Proposition \ref{prop:sos1} to the minimum dwell-time case, we simply have to add the constraint $\He[A^TS(\bar{T})]+E_c^TS(\bar{T})E_c\prec0$ to the program.

  The numerical results are gathered in Table \ref{tab:constant1} and Table \ref{tab:minDT1} where we can see that the deterministic results are indeed retrieved and that, as expected, stability deteriorates as we increase the value of the  parameters $\kappa$ and $\delta$. It is also interesting to observe that, for this example, the results for the constant and the minimum dwell-times are quite close. Note, however, that this is far from being a general rule.
In order to estimate the conservatism of the method, we consider the criterion of statement \eqref{item:periodic:1b} of Theorem \ref{th:periodic} and we find the results summarized in Table \ref{tab:constant1th} where we can see that the proposed method is very accurate for this system. This then implies that the estimates of the minimum dwell-time are also equally accurate.
\end{example}

\begin{table}[\tablepos]
\centering
\caption{Estimated smallest constant dwell-time $\bar{T}$ for the system \eqref{eq:mainsyst}-\eqref{eq:ex1} for various values for $\kappa$ and $\delta$ using Proposition \ref{prop:sos1} and matrix polynomials $S(\tau)$ and $\Gamma(\tau)$ of degree 6.}\label{tab:constant1}
  \begin{tabular}{|c||c|c|c|c|c|}
    \hline
    $\kappa$/$\delta$ & 0 & 0.6 & 1.2 & 1.8 & 2.4\\
    \hline
    \hline
    0 &   1.1406   & 1.1568 &   1.2031  &  1.2734 &   1.3595\\
    0.3 &     1.1918 &   1.2089&    1.2578 &   1.3319 &   1.4225\\
    0.6 & 1.3787 &   1.3992  &  1.4577   & 1.5458  &  1.6531\\
    0.9 & 1.8774  &  1.9073  &  1.9920   & 2.1184  &  2.2702\\
    1.2 & 3.9306  &  4.0011  &  4.1938 &   4.4765  &  4.8305\\
    \hline
  \end{tabular}
\end{table}

\begin{table}[\tablepos]
\centering
\caption{Smallest constant dwell-time $\bar{T}$ for the system \eqref{eq:mainsyst}-\eqref{eq:ex1} for various values for $\kappa$ and $\delta$ using Theorem \ref{th:periodic}, \eqref{item:periodic:1b}.}\label{tab:constant1th}
  \begin{tabular}{|c||c|c|c|c|c|}
    \hline
    $\kappa$/$\delta$ & 0 & 0.6 & 1.2 & 1.8 & 2.4\\
    \hline
    \hline
    0 & 1.1406  &  1.1568    &1.2030    &1.2732    &1.3593\\
    0.3 &  1.1918    &1.2089    &1.2577    &1.3317    &1.4223\\
    0.6 & 1.3787&    1.3992  &  1.4576  &  1.5456 &   1.6528\\
    0.9 & 1.8773   & 1.9072  &  1.9918  &  2.1181 &   2.2700\\
    1.2 & 3.9315  &  4.0005  &  4.1932  &  4.4752  &  4.8083\\
    \hline
  \end{tabular}
\end{table}

\begin{table}[\tablepos]
\centering
\caption{Estimated minimum dwell-time $\bar{T}$ for the system \eqref{eq:mainsyst}-\eqref{eq:ex1} for various values for $\kappa$ and $\delta$ using Proposition \ref{prop:sos1} and matrix polynomials $S(\tau)$ and $\Gamma(\tau)$ of degree 6.}\label{tab:minDT1}
  \begin{tabular}{|c||c|c|c|c|c|}
    \hline
    $\kappa$/$\delta$ & 0 & 0.6 & 1.2 & 1.8 & 2.4\\
    \hline
    \hline
    0 & 1.1406&    1.1568    &1.2031   & 1.2734  &  1.3595\\
    0.3 & 1.1918&    1.2089    &1.2578  &  1.3319  &  1.4225\\
    0.6 &  1.3787&    1.3992  &  1.4577 &   1.5458   & 1.6531\\
    0.9 & 1.8774 &   1.9073   & 1.9920  &  2.1184   & 2.2703\\
    1.2 & 3.9307  &  4.0012   & 4.1941&    4.4776  &  4.8565\\
    \hline
  \end{tabular}
\end{table}

\begin{example}
  We now consider the system \eqref{eq:mainsyst} with the matrices \cite{Briat:11l,Briat:12h,Briat:13d}
    \begin{equation}\label{eq:ex2}
    A=\begin{bmatrix}
      1 & 3\\
      -1 & 2
    \end{bmatrix},\ J=\begin{bmatrix}
      0.5 & 0\\
      0 & 0.5
    \end{bmatrix},\ E_c=\kappa I_2, E_d=\delta I_2,
  \end{equation}
  $B_c^1=B_c^2=B_d^1=B_d^2=0$ (i.e. no control input) for some scalars $\kappa,\delta\ge0$. For $\delta=\kappa=0$, this system is known  (see \cite{Briat:11l})  to be stable with maximum dwell-time $T_{max}\approx 0.4620$, i.e. for all $T_k\le0.4620$. Using the ranged dwell-time result with $T_{min}=0.01$, we get the results the maximal value for $T_{max}$ using a variation of the SOS program in Proposition \ref{prop:sos1} and a bisection approach. In order to evaluate the conservatism, we compare these results with the maximum value for $\theta$ for which the LMIs $P\succ0$ and $\mathcal{M}(\theta)^TP\mathcal{M}(\theta)-P\prec0$ are feasible for all $\theta\in[0.01,T_{max}]$ (quadratic stability condition). Since this problem is not directly solvable (note that sum of squares methods do not apply because of the presence of exponential terms), the interval $[0.01,T_{max}]$ is gridded with 201 points and the LMIs are checked on these points only. This leads to the results of Table \ref{tab:ranged_grid} where we can observe that slightly larger values for $T_{max}$ are found. However, this is at the price of a much higher computational cost (see \cite{Briat:14f,Briat:15f}) and gridding imprecision. Note also that the exponential conditions are only valid in the time-invariant case, while the conditions of Theorem \ref{th:aperiodic} are more flexible and apply also to systems affected by time-varying uncertainties/parameters.
\end{example}

\begin{table}[\tablepos]
\centering
\caption{Estimated ranged dwell-time $T_{max}$ for $T_{min}=0.01$ for the system \eqref{eq:mainsyst}-\eqref{eq:ex2} for various values for $\kappa$ and $\delta$ using an adaptation of Proposition \ref{prop:sos1} and matrix polynomials of degree 6.}\label{tab:ranged}
  \begin{tabular}{|c||c|c|c|c|c|}
    \hline
     $\kappa$/$\delta$ & 0 & 0.2 & 0.4 & 0.6 & 0.8\\
    \hline
    \hline
    0 &0.4620  &  0.4126    &0.2971    &0.1647    &0.0388\\
    0.75 & 0.3891    &0.3474    &0.2502    &0.1387    &0.0327\\
    1.5 &  0.2640   & 0.2357    &0.1698    &0.0941    &0.0221\\
    2.75 &0.1312  &  0.1171  &  0.0844 &   0.0467   & 0.0110\\
    3 &0.1154 &   0.1031&    0.0742 &   0.0411  &  0.0064\\
    \hline
  \end{tabular}
\end{table}

\begin{table}[\tablepos]
\centering
\caption{Estimated ranged dwell-time $T_{max}$ for $T_{min}=0.01$ for the system \eqref{eq:mainsyst}-\eqref{eq:ex2} for various values for $\kappa$ and $\delta$ using the gridded quadratic stability condition.}\label{tab:ranged_grid}
  \begin{tabular}{|c||c|c|c|c|c|}
    \hline
    $\kappa$/$\delta$ & 0 & 0.2 & 0.4 & 0.6 & 0.8\\
    \hline
    \hline
    0 & 0.4620 &   0.4126   & 0.2971  &  0.1647 &   0.0388\\
    0.75 & 0.3891 &   0.3474   & 0.2502    &0.1387   & 0.0327\\
    1.5 & 0.2640  &  0.2357  &  0.1698    &0.0941   & 0.0221\\
    2.75 & 0.1312 &   0.1171&    0.0844  &  0.0467&    0.0110\\
    3 & 0.1155  &  0.1031    &0.0742   & 0.0411 &   0.0411\\
    \hline
  \end{tabular}
\end{table}

\section{Mean-square stabilization of stochastic linear impulsive systems}\label{sec:stabz}

We extend here the results obtained in the previous section to address the stabilization problem by state-feedback. We first consider stabilization under ranged dwell-time and then derive stabilization conditions under minimum dwell-time.

\subsection{Stabilization under ranged dwell-time}

We consider in this section the following class of state-feedback control law
\begin{equation}\label{eq:SFranged}
  \begin{array}{rcl}
    u_c(t_k+\tau)&=&K_c(\tau)x(t_k+\tau),\ \tau\in(0,T_k]\\
    u_d(k)&=&K_dx(t_k)
  \end{array}
\end{equation}
where $T_k\in[T_{min},T_{max}]$, $k\in\mathbb{N}_0$. The continuous matrix-valued function $K_c:[0,T_{max}]\to\mathbb{R}^{m_c\times n}$ and the matrix $K_d\in\mathbb{R}^{m_d\times n}$ involved above are the gains of the controller to be determined. We then have the following result:
\begin{theorem}\label{th:aperiodicz}
The following statements are equivalent:
\begin{enumerate}
  \item There exists a matrix $P\in\mathbb{S}^n_{\succ0}$ such that the LMI
  \begin{equation}
  \begin{array}{l}
     \hspace{-7mm} \E[(J+B_d^1K_d)^T\Psi_1(\theta)(J+B_d^1K_d)]-P\\
        \hfill+E_d^T\Psi_1(\theta)E_d+K_d^T(B_d^2)^T\Psi_1(\theta)B_d^2K_d\prec0
  \end{array}
  \end{equation}
  holds for all $\theta\in[T_{min},T_{max}]$ where $\Psi_1(\theta):=\Phi_1(\theta)^TP\Phi_1(\theta)$ and
  \begin{equation}
  \begin{array}{rcl}
        d\Phi_1(\tau)&=&(A+B_c^1K_c(\tau))\Phi_1(\tau)d\tau\\
        &&\hspace{-7mm}+(E_cdW_1(\tau)+B_c^2K_c(\tau)dW_2(\tau))\Phi_1(\tau)
  \end{array}
  \end{equation}
  defined for $\tau\in[0,T_{max}]$ and $\Phi_1(0)=I$.
\item There exist matrix-valued functions $\tilde{S}:[0,T_{max}]\to\mathbb{S}^n$, $\tilde{S}(0)\succ0$, $U_c:[0,T_{max}]\to\mathbb{R}^{m_c\times n}$, a matrix $U_d\in\mathbb{R}^{m_d\times n}$ and a scalar $\eps>0$ such that the conditions
  \begin{equation}\label{eq:RDTz1}
  \begin{bmatrix}
    \Lambda(\tau) & \star & \star\\
   E_c\tilde{S}(\tau) & -\tilde{S}(\tau) & 0\\
    B_c^2U_c(\tau) & \star & -\tilde{S}(\tau)
  \end{bmatrix}\preceq0
\end{equation}
and
\begin{equation}\label{eq:RDTz2}
  \begin{bmatrix}
    -\tilde{S}(0)+\eps I & \star & \star & \star\\
    J\tilde{S}(0)+B_{d}^1U_d & -\tilde{S}(\theta) & \star & \star\\
    E_d\tilde{S}(0) & 0 & -\tilde{S}(\theta) & \star\\
    B_{d}^2U_d & 0 & 0 & -\tilde{S}(\theta)
  \end{bmatrix}\preceq0
\end{equation}
hold for all $\tau\in[0,T_{max}]$ and all $\theta\in[T_{min},T_{max}]$ where $\Lambda(\tau)=\dot{\tilde{S}}(\tau)+\He[A\tilde{S}(\tau)+B_{c}^1U_c(\tau)]$.
\end{enumerate}
Moreover, when one of the above statements holds, then the closed-loop system \eqref{eq:mainsyst}-\eqref{eq:SFranged} is mean-square asymptotically stable under ranged dwell-time $(T_{min},T_{max})$ and suitable controller gains can be computed from the conditions of statement (b) using the expressions
\begin{equation}\label{eq:controllzreskmds}
  K_c(\tau)=U_c(\tau)\tilde{S}(\tau)^{-1}\ \textnormal{and}\ K_d=U_d\tilde{S}(0)^{-1}.
\end{equation}
\end{theorem}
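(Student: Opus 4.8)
The plan is to recognize that the closed-loop system \eqref{eq:mainsyst}-\eqref{eq:SFranged} is itself a stochastic linear impulsive system and to reduce the statement to Theorem~\ref{th:aperiodic} followed by a convexifying change of variables. Substituting \eqref{eq:SFranged} into \eqref{eq:mainsyst} gives, on each inter-impulse interval, the continuous dynamics $dx=(A+B_c^1K_c(\tau))x\,dt+E_cx\,dW_1+B_c^2K_c(\tau)x\,dW_2$ and the jump rule $x(t_k^+)=(J+B_d^1K_d)x(t_k)+E_dx(t_k)\nu_1+B_d^2K_dx(t_k)\nu_2$. This is again of the form \eqref{eq:mainsyst}, but with a clock-dependent continuous matrix $A+B_c^1K_c(\tau)$, an additional continuous multiplicative noise channel $B_c^2K_c(\tau)$, and modified jump and discrete-noise matrices; its fundamental solution is exactly $\Phi_1$. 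First I would observe that statement (a) is precisely condition \eqref{item:aperiodic:1} of Theorem~\ref{th:aperiodic} written for this closed-loop system, with $\Xi_P$ replaced by $\Psi_1$, the jump map replaced by $J+B_d^1K_d$, and two discrete noise terms $E_d$ and $B_d^2K_d$. Since the proof of Theorem~\ref{th:aperiodic} carries over to a clock-dependent continuous matrix and a second independent Wiener channel essentially verbatim, the only change being two extra positive-semidefinite terms in the infinitesimal generator, this immediately yields the mean-square asymptotic stability claimed at the end and reduces the equivalence (a)~$\Leftrightarrow$~(b) to relating (a) to the clock-dependent analysis condition \eqref{item:aperiodic:2} and then convexifying it.

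By the closed-loop version of Theorem~\ref{th:aperiodic}, statement (a) is equivalent to the existence of a clock-dependent $S:[0,T_{max}]\to\mathbb{S}^n$ with $S(0)\succ0$ and a scalar $\eps>0$ satisfying, for all $\tau\in[0,T_{max}]$ and all $\theta\in[T_{min},T_{max}]$, the continuous inequality $-\dot{S}+\He[(A+B_c^1K_c)^TS]+E_c^TSE_c+K_c^T(B_c^2)^TSB_c^2K_c\preceq0$ (all arguments evaluated at $\tau$) and the jump inequality $(J+B_d^1K_d)^TS(\theta)(J+B_d^1K_d)-S(0)+E_d^TS(\theta)E_d+K_d^T(B_d^2)^TS(\theta)B_d^2K_d+\eps I\preceq0$. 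These two conditions are bilinear in the triple $(S,K_c,K_d)$ and hence not directly tractable; the core of the proof is to turn them losslessly into the convex LMIs \eqref{eq:RDTz1}-\eqref{eq:RDTz2}.

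The convexification is a congruence transformation together with a change of controller variables. I would set $\tilde{S}(\tau):=S(\tau)^{-1}$, $U_c(\tau):=K_c(\tau)\tilde{S}(\tau)$ and $U_d:=K_d\tilde{S}(0)$. Pre- and post-multiplying the continuous inequality by $\tilde{S}(\tau)$ and using the identity $\dot{S}=-S\dot{\tilde{S}}S$ turns the first three terms into $\Lambda(\tau)=\dot{\tilde{S}}(\tau)+\He[A\tilde{S}(\tau)+B_c^1U_c(\tau)]$, while the remaining two terms become $(E_c\tilde{S})^T\tilde{S}^{-1}(E_c\tilde{S})$ and $(B_c^2U_c)^T\tilde{S}^{-1}(B_c^2U_c)$; a Schur complement applied to these two quadratic terms yields exactly \eqref{eq:RDTz1}. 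Likewise, pre- and post-multiplying the jump inequality by $\tilde{S}(0)$ produces $-\tilde{S}(0)$ in place of $-S(0)$ and the off-diagonal entries $J\tilde{S}(0)+B_d^1U_d$, $E_d\tilde{S}(0)$ and $B_d^2U_d$; a Schur complement with respect to $S(\theta)=\tilde{S}(\theta)^{-1}$ gives \eqref{eq:RDTz2}. Since congruence by an invertible matrix, the variable change, and the Schur complement are all reversible, the implication runs in both directions, and inverting the change of variables recovers the gains exactly as in \eqref{eq:controllzreskmds}.

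The main obstacle I anticipate is the bookkeeping of positive definiteness and of the scalar $\eps$. For the congruence and the Schur complements to be lossless one needs $S(\tau)$, equivalently $\tilde{S}(\tau)$, to be positive definite and in particular invertible for every $\tau\in[0,T_{max}]$, not merely at $\tau=0$: in the direction (a)~$\Rightarrow$~(b) this follows from $S(\tau)=\E[\Phi_1(\tau)^TS(0)\Phi_1(\tau)]$ together with the almost-sure invertibility of the fundamental solution $\Phi_1$, whereas in the direction (b)~$\Rightarrow$~(a) the diagonal blocks $-\tilde{S}(\tau)$ and $-\tilde{S}(\theta)$ of \eqref{eq:RDTz1}-\eqref{eq:RDTz2} force $\tilde{S}\succ0$ throughout. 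The constant $\eps$ needs a separate remark, since congruence by $\tilde{S}(0)$ maps the analysis margin $\eps I$ to $\eps\tilde{S}(0)^2$, so its precise value is not preserved; this is harmless because both conditions only assert the existence of \emph{some} positive margin, and the bound $\eps\tilde{S}(0)^2\succeq\eps\lambda_{\min}(\tilde{S}(0))^2 I$ lets one pass between an $\eps I$ margin and an $\eps\tilde{S}(0)^2$ margin in either direction.
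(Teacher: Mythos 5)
Your proposal is correct and follows essentially the same route as the paper's own proof: substitute the feedback into the system, identify statement (a) with the ranged dwell-time condition of Theorem~\ref{th:aperiodic} for the closed-loop system, and pass to statement (b) via the congruence transformation $\tilde{S}(\tau)=S(\tau)^{-1}$, the change of variables $U_c=K_c\tilde{S}$, $U_d=K_d\tilde{S}(0)$, and Schur complements. In fact your write-up is more careful than the paper's two-line argument, since you explicitly address the points the paper leaves implicit (invertibility of $S(\tau)$ along the whole interval, the distortion of the margin $\eps I$ into $\eps\tilde{S}(0)^2$ under congruence, and the fact that Theorem~\ref{th:aperiodic} must be re-derived for a clock-dependent drift matrix and a second Wiener channel).
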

\begin{proof}
A Schur complement on the inequality \eqref{eq:RDTz1} and a congruence transformation with respect to $S(\tau)=\tilde{S}(\tau)^{-1}$ yields the inequality
\begin{equation}
\begin{array}{l}
    \hspace{-3mm}-S(\tau)+\He[S(\tau)(A+B_c^1K_c(\tau))]+E_c^TS(\tau)E_c\\
    \hspace{3cm}+K_c(\tau)^TB_c^{2T}S(\tau)B_c^2K_c(\tau)\preceq0.
\end{array}
\end{equation}
Similarly, \eqref{eq:RDTz2} can be shown to be equivalent to
\begin{equation}
\begin{array}{l}
    \hspace{-3mm}-S(0)+(J+B_d^1U_d)^TS(\theta)(J+B_d^1U_d)\\
    \hspace{1.5cm}+E_d^TS(\theta)E_d+K_d^TB_d^{2T}S(\tau)B_d^2K_d\prec0.
\end{array}
\end{equation}
Invoking now Theorem \ref{th:aperiodic}, Statement (b) proves the equivalence between the statements of the result.
\end{proof}

\subsection{Stabilization under minimum dwell-time}

Let us consider now the minimum dwell-time case. Since the dwell-time can be arbitrarily large in this setting, we propose to use the following state-feedback control law
\begin{equation}
  \begin{array}{rcl}
    u_c(t_k+\tau)&=&\left\{\begin{array}{l}
      K_c(\tau)x(t_k+\tau),\ \tau\in(0,\bar{T}]\\
      K_c(\bar{T})x(t_k+\tau),\ \tau\in(\bar{T},T_k]
    \end{array}\right.\\
    u_d(k)&=&K_dx(t_k)
  \end{array}
\end{equation}
where $K_c:[0,\bar{T}]\to\mathbb{R}^{m_c\times n}$ and $K_d\in\mathbb{R}^{m_d\times n}$ are the gains of the controllers that have to be determined. We then have the following result:
\begin{theorem}\label{th:minDTz}
The following statements are equivalent:
\begin{enumerate}
  \item There exists a matrix $P\in\mathbb{S}^n_{\succ0}$ such that the LMIs
  \begin{equation}
  \begin{array}{l}
     \hspace{-7mm} \E[(J+B_d^1K_d)^T\Psi_2(\bar{T})(J+B_d^1K_d)]-P\\
        \hfill+E_d^T\Psi_2(\bar{T})E_d+K_d^T(B_d^2)^T\Psi_2(\bar{T})B_d^2K_d\prec0
  \end{array}
  \end{equation}
  and
  \begin{equation}
    A^TP+PA+E_c^TPE_c\prec0
  \end{equation}
  hold where $\Psi_2(\theta)=\Phi_2(\theta)^TP\Phi_2(\theta)$ and
  \begin{equation}
  \begin{array}{rcl}
        d\Phi_2(\tau)&=&(A+B_c^1K_c(\tau))\Phi_2(\tau)d\tau\\
        &&\hspace{-7mm}+(E_cdW_1(t)+B_c^2K_c(\tau)dW_2(t))\Phi_2(\tau)
  \end{array}
  \end{equation}
   defined for $\tau\ge0$ and $\Phi_2(0)=I$.
\item There exist matrix-valued functions $\tilde{S}:[0,T_{max}]\to\mathbb{S}^n$, $\tilde{S}(\bar{T})\succ0$ , $U_c:[0,T_{max}]\to\mathbb{R}^{m_c\times n}$, a matrix $U_d\in\mathbb{R}^{m_d\times n}$ and a scalar $\eps>0$ such that the conditions
     \begin{equation}
\begin{bmatrix}
    \Lambda(\tau) & E_c\tilde{S}(\tau)+B_c^2U(\tau)\\
  \star & -\tilde{S}(\tau)
\end{bmatrix}\preceq0
\end{equation}
\begin{equation}
  \begin{bmatrix}
    \He[A\tilde{S}(\bar{T})+B_{c}^1U_c(\bar{T})] & E_c\tilde{S}(\bar{T})+B_c^2U(\bar{T})\\
    \star & -\tilde{S}(\bar{T})
  \end{bmatrix}\prec0
\end{equation}
and
\begin{equation}
  \begin{bmatrix}
 -\tilde{S}(0)+\eps I & J\tilde{S}(\bar{T})+B_d^1U_d & E_d\tilde{S}(\bar{T})+B_d^2U_d\\
 \star & -\tilde{S}(\bar{T}) & 0\\
 \star & \star & -\tilde{S}(\bar{T})
  \end{bmatrix}\preceq0
\end{equation}
hold for all $\tau\in[0,\bar{T}]$ where $\Lambda(\tau)=\dot{\tilde{S}}(\tau)+\He[A\tilde{S}(\tau)+B_c^1U(\tau)]$.
\end{enumerate}
Moreover, when this is the case, the closed-loop system \eqref{eq:mainsyst}-\eqref{eq:SFranged} is mean-square asymptotically stable under minimum dwell-time $\bar{T}$ and suitable controller gains can be computed from the conditions of the statement (b) using the expressions
\begin{equation}
  K_c(\tau)=U_c(\tau)\tilde{S}(\tau)^{-1}\quad\textnormal{and}\quad K_d=U_d\tilde{S}(0)^{-1}.
\end{equation}
\end{theorem}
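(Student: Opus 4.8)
The plan is to follow the blueprint of the proof of Theorem~\ref{th:aperiodicz}, replacing the appeal to the ranged dwell-time result by the minimum dwell-time result of Theorem~\ref{th:minDT}. The starting observation is that the control law turns \eqref{eq:mainsyst} into a closed-loop impulsive system of exactly the form treated in Section~\ref{sec:stability}, obtained by the substitutions $A\to A+B_c^1K_c(\tau)$ and $J\to J+B_d^1K_d$, together with the two extra multiplicative-noise channels $B_c^2K_c(\tau)\,dW_2$ and $B_d^2K_d\,\nu_2$ acting on the state; the matrices $\Psi_2$ play for this closed loop the role that $\Xi_P$ played in the open-loop analysis. With this identification, statement (a) is precisely the closed-loop instance of statement \eqref{item:minDT:2} of Theorem~\ref{th:minDT} (the flow term $A^\T P+PA+E_c^\T PE_c$ being the generator condition for the subsystem that governs the dynamics once the gain has been frozen at $K_c(\bar T)$, which is what licenses arbitrarily large inter-impulse intervals). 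Consequently, once (a)$\Leftrightarrow$(b) is established, mean-square asymptotic stability under minimum dwell-time $\bar T$ and the gain-recovery formulas will follow directly from Theorem~\ref{th:minDT}.

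It then remains to prove (a)$\Leftrightarrow$(b). I would treat the three inequalities of statement (b) one at a time and show that each is equivalent, after a Schur complement followed by a congruence transformation with $S(\cdot)=\tilde{S}(\cdot)^{-1}$ and the substitutions $U_c(\tau)=K_c(\tau)\tilde{S}(\tau)$ and $U_d=K_d\tilde{S}(\cdot)$, to the corresponding closed-loop condition of Theorem~\ref{th:minDT}, statement \eqref{item:minDT:3}. Concretely, the first inequality reproduces the clock-dependent flow condition \eqref{eq:dksldk67} on $(0,\bar T]$; the second, strict, inequality reproduces the stationary condition \eqref{eq:dksldk670}, guaranteeing that the frozen closed-loop continuous subsystem with gain $K_c(\bar T)$ is mean-square stable, consistently with the requirement $S(\bar T+s)=S(\bar T)$; and the third reproduces the jump condition \eqref{eq:dksldk672} relating $S(0)$ and $S(\bar T)$. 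Invoking then the chain of equivalences in Theorem~\ref{th:minDT} closes the argument.

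The main obstacle will be the bookkeeping of these Schur/congruence manipulations in the presence of the two \emph{independent} multiplicative-noise channels. Because $W_1\perp W_2$ and $\nu_1\perp\nu_2$, the closed-loop Lyapunov expressions contain no cross terms between the $E_c$/$E_d$ channels and the $B_c^2K_c$/$B_d^2K_d$ channels, so each channel must be linearized by its \emph{own} Schur block; this is the source of the several distinct off-diagonal blocks in the matrix inequalities, and the blocks must be kept separate rather than merged. Two consistency points deserve particular care: first, the identity $\tilde{S}\dot{S}\tilde{S}=-\dot{\tilde{S}}$ flips the sign of the derivative term under the congruence, so one must check that the sign convention of \eqref{eq:dksldk67} is correctly recovered in $\Lambda(\tau)=\dot{\tilde{S}}(\tau)+\He[A\tilde{S}(\tau)+B_c^1U_c(\tau)]$; second, on the jump inequality the congruence is applied with different clock values on its two sides, so one must verify that the substitution $U_d=K_d\tilde{S}(\cdot)$ uses the Lyapunov matrix adjacent to the discrete gain, ensuring that the factor $(J+B_d^1K_d)$ reassembles cleanly. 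Once each inequality of (b) has been matched to its closed-loop analysis counterpart, inverting the substitutions yields the stated gains and completes the proof.
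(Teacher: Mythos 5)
Your route is indeed the paper's own: the published proof of this theorem is a single sentence ("the same manipulations as in the proof of Theorem \ref{th:aperiodicz}, with Theorem \ref{th:minDT} invoked in place of Theorem \ref{th:aperiodic}"), and your plan reproduces it, including the correct identification of the three LMIs of statement (b) with the flow condition \eqref{eq:dksldk67}, the stationary condition \eqref{eq:dksldk670} and the jump condition \eqref{eq:dksldk672}.

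However, the "consistency points" you promise to check do not merely deserve care: checked honestly, they \emph{fail} against statement (b) as printed, so your closing claim that matching each inequality and inverting the substitutions "completes the proof" does not hold literally. First, the congruence of \eqref{eq:dksldk67} (which carries $+\dot S$) by $\tilde S=S^{-1}$ produces $-\dot{\tilde S}(\tau)+\He[A\tilde S(\tau)+B_c^1U_c(\tau)]$, whereas the printed $\Lambda$ has $+\dot{\tilde S}$; that sign is the one generated by the ranged-dwell-time inequality \eqref{eq:kdskddmlkdsqdmlkswmlk}, not by \eqref{eq:dksldk67} (it can be repaired by the clock reversal $S(\bar T-\tau)$, but then the stationary condition and the gain substitution migrate to the other endpoint of $[0,\bar T]$). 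Second, since \eqref{eq:dksldk672} sandwiches $S(0)$ between the jump maps and subtracts $S(\bar T)$, your own rule — congruence by the Lyapunov matrix adjacent to the discrete gain — forces $U_d=K_d\tilde S(\bar T)$, a top-left block $-\tilde S(\bar T)$, diagonal blocks $-\tilde S(0)$ and $K_d=U_d\tilde S(\bar T)^{-1}$, each of which contradicts the printed third LMI (off-diagonal $J\tilde S(\bar T)+B_d^1U_d$ against diagonal $-\tilde S(\bar T)$ and top-left $-\tilde S(0)$) and the printed recovery formula $K_d=U_d\tilde S(0)^{-1}$. Third, your (correct) independence argument rules out the printed \emph{merged} blocks $E_c\tilde S(\tau)+B_c^2U(\tau)$ and $E_d\tilde S(\bar T)+B_d^2U_d$: undoing their Schur complements creates exactly the cross terms you say must be absent, i.e.\ these LMIs linearize a system in which the two channels are driven by the \emph{same} noise process. (A related slip sits in statement (a), whose second LMI is printed with the open-loop $A,E_c$ rather than with the gain frozen at $K_c(\bar T)$, so (a) as printed is not "precisely the closed-loop instance" of Theorem \ref{th:minDT}, statement \eqref{item:minDT:2}.) Executed faithfully, your plan therefore proves a corrected statement (b) — separate Schur blocks and consistent clock values, exactly parallel to \eqref{eq:RDTz1}--\eqref{eq:RDTz2} — rather than the printed one; you should derive and state those corrected LMIs explicitly instead of asserting that the printed inequalities are recovered, because they are not what these manipulations yield.
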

\begin{proof}
The proof is based on the same manipulations as in the proof of Theorem \ref{th:aperiodicz}, with the difference that Theorem \ref{th:minDT} is invoked in place of Theorem \ref{th:aperiodic}.
\end{proof}

\subsection{Example}

Let us consider the system \eqref{eq:mainsyst} with the matrices
\begin{equation}\label{eq:ex:DTz}
\begin{array}{l}
    A=\begin{bmatrix}
    1 & 1\\
    1 & -2
  \end{bmatrix},B_c^1=\begin{bmatrix}
    4\\0
  \end{bmatrix}, B_c^2=\begin{bmatrix}
    1\\0
  \end{bmatrix}, J=\begin{bmatrix}
    3 & 1\\
    1 & 2
  \end{bmatrix},\\
  E_c=\begin{bmatrix}
    1 & 0\\
    1 & 2
  \end{bmatrix}, E_d=0.2\begin{bmatrix}
    1 & 0\\
    1 & -1
  \end{bmatrix}, B_d^1=\begin{bmatrix}
    1\\0
  \end{bmatrix}, B_d^2=\begin{bmatrix}
    0\\0.1
  \end{bmatrix}.
\end{array}
\end{equation}
Using then Theorem \ref{th:minDTz} with minimum dwell-time $\bar{T}=0.1$ and polynomials of order 1 and 2, we obtain the controller gains $  K_d=\begin{bmatrix}
    -3.9165&   -2.9751
  \end{bmatrix}$ and
%\begin{equation*}
%  K_d=\begin{bmatrix}
%    -3.9165&   -2.9751
%  \end{bmatrix}
%\end{equation*}
%and
\begin{equation*}
  K_c(\tau)=\dfrac{1}{\textnormal{den}(\tau)}\begin{bmatrix}
 -0.0299 \tau^2  -0.2251\tau+    0.1605\\%-1.0825\tau^2+  0.54537\tau -0.10930\\
 0.0045\tau^2   -0.0167   \tau+ 0.1881 %-5.0616\tau^2+  1.5560\tau -0.16543
  \end{bmatrix}^T
\end{equation*}
where $\textnormal{den}(\tau)= 0.0291\tau^2+  0.5439\tau -0.1255$. %0.38584\tau^2-0.28105\tau+  0.026135$.
Choosing then $x_0=(2, -2)$, we then obtain the trajectory for $(\E[||x(t)||_2^2])^{1/2}$ depicted in Fig.~\ref{fig:minDTz} where we can observe the convergence to 0, emphasizing the mean-square asymptotic stability of the closed-loop system.

%. We can clearly see that, even in spite of destabilizing impulses, the controller is able to steer the trajectories of the system to 0, achieving thereby mean-square stability for the closed-loop system.

\begin{figure}[h]
  \centering
  \hspace{-3mm}\includegraphics[width=0.49\textwidth]{./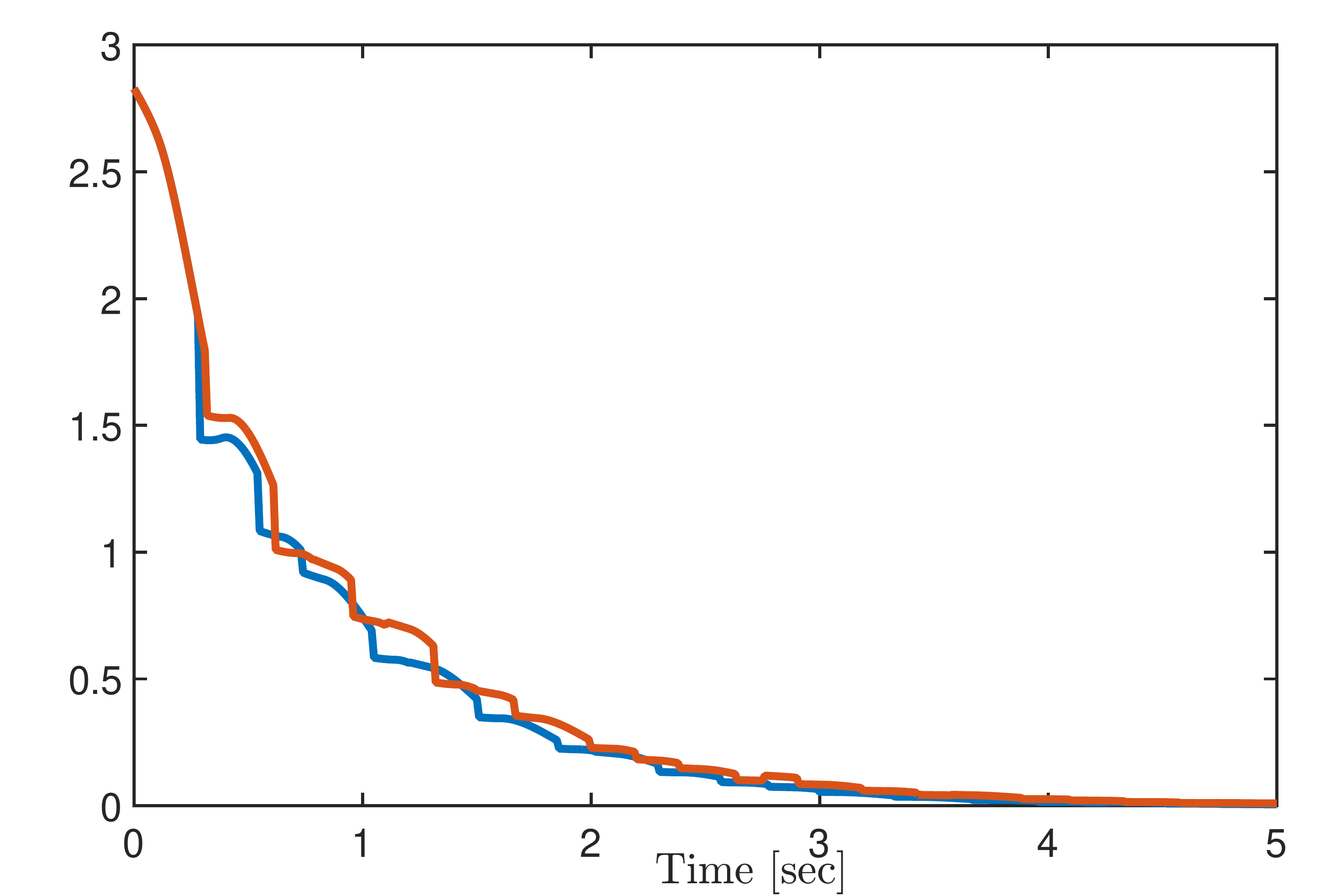}
  \caption{Evolution of $(\E[||x(t)||_2^2])^{1/2}$ along the trajectories of the system \eqref{eq:mainsyst}-\eqref{eq:ex:DTz} subject to two randomly generated impulse time sequences with minimum dwell-time $\bar T=0.1$.}\label{fig:minDTz}
\end{figure}

\section{Application to aperiodic sampled-data systems}\label{sec:SD}

As an application example, we utilize the ranged dwell-time result in order to derive a stabilization condition for linear aperiodic stochastic sampled-data systems represented in the impulsive form:
%
%\begin{equation}\label{eq:SDsyst}
%\begin{array}{lcl}
%    \begin{bmatrix}
%    dx^1_{sd}(t)\\
%    dx^2_{sd}(t)
%  \end{bmatrix}&=&\underbrace{\begin{bmatrix}
%    A_{sd} & B_{sd}\\
%    0 & 0
%  \end{bmatrix}}_{\mbox{$\bar{A}$}}  \begin{bmatrix}
%    x^1_{sd}(t)\\
%    x^2_{sd}(t)
%  \end{bmatrix}dt\\
%  &&+\underbrace{\begin{bmatrix}
%    E_{sd} & 0\\
%    0 & 0
%  \end{bmatrix}}_{\mbox{$\bar{E}_1$}}\begin{bmatrix}
%    x^1_{sd}(t)\\
%    x^2_{sd}(t)
%  \end{bmatrix} dW_1(t)\\
%  &&+\underbrace{\begin{bmatrix}
%    0 & \alpha B_{sd}\\
%    0 & 0
%  \end{bmatrix}}_{\mbox{$\bar{E}_2$}}  \begin{bmatrix}
%    x^1_{sd}(t)\\
%    x^2_{sd}(t)
%  \end{bmatrix} dW_2(t)\\
%  \begin{bmatrix}
%    x^1_{sd}(t_k^+)\\
%    x^2_{sd}(t_k^+)
%  \end{bmatrix}&=&\underbrace{\begin{bmatrix}
%    I & 0\\
%    K_d^1 & K_d^2
%  \end{bmatrix}}_{\mbox{$J_0+\bar{B}K_d$}}    \begin{bmatrix}
%    x^1_{sd}(t_k)\\
%    x^2_{sd}(t_k)
%  \end{bmatrix}
%\end{array}
%\end{equation}
\begin{equation}\label{eq:SDsyst}
\begin{array}{lcl}
%dx_{sd}(t)&=&\left(\bar{A}dt+\bar{E}_1dW_1(t)+\bar{E}_2dW_2(t)\right)x_{sd}(t)\\%\bar{A}x_{sd}(t)dt+\bar{E}_1x_{sd}(t)dW_1(t)+\bar{E}_2x_{sd}(t)dW_2(t)\\
\displaystyle dx_{sd}(t)&=&\bar{A}x_{sd}(t)dt+\sum_{i=1}^2\bar{E}_ix_{sd}(t)dW_i(t)\\
  x_{sd}(t_k^+)&=&(J_0+\bar{B}K_d)x_{sd}(t_k)
  \end{array}
  \end{equation}
  where
  \begin{equation}
  \begin{array}{l}
        \bar A =\begin{bmatrix}
    A_{sd} & B_{sd}\\
    0 & 0
  \end{bmatrix},\ \bar{E}_1=\begin{bmatrix}
    E_{sd} & 0\\
    0 & 0
  \end{bmatrix},\ \bar{E}_2=\begin{bmatrix}
    0 & \alpha B_{sd}\\
    0 & 0
  \end{bmatrix}\\
    J_0=\begin{bmatrix}
    I & 0\\
    0 & 0
  \end{bmatrix},\ B=\begin{bmatrix}
    0\\
    I
  \end{bmatrix},\ K_d=\begin{bmatrix}
    K_d^1 & K_d^2
  \end{bmatrix}
  \end{array}
  \end{equation}
and $x_{sd}:=(x_{sd}^1,x_{sd}^2)$ where $x_{sd}^1\in\mathbb{R}^{n}$ is the state of the continuous-time system, $x_{sd}^2\in\mathbb{R}^{m}$ is the piecewise-constant state modeling the zero-order hold. As before $W_1,W_2\in\mathbb{R}$ are two independent zero mean Wiener processes. The parameter $\alpha>0$ is here to scale the amplitude of the noise on the control channel. The gain of the controller, denoted by $K_d$, is given by $K_d=\begin{bmatrix}
  K_d^1 & K_d^2
\end{bmatrix}$  where $K_d^1\in\mathbb{R}^{m\times n}$ and $K_d^2\in\mathbb{R}^{m\times m}$.

We then have the following result:
\begin{theorem}\label{th:rangeSD}
The following statements are equivalent:
\begin{enumerate}
  \item There exists a matrix $P\in\mathbb{S}^n_{\succ0}$ such that the LMI
  \begin{equation}
  \begin{array}{l}
    \E[(J_0+\bar{B}K_d)^T\Psi_{sd}(\theta)(J_0+\bar{B}K_d)]-P\prec0
  \end{array}
  \end{equation}
  holds for all $\theta\in[T_{min},T_{max}]$ where    $\Psi_{sd}(\theta):=\Phi_{sd}(\theta)^TP\Phi_{sd}(\theta)$ and
  \begin{equation}
  \begin{array}{rcl}
        d\Phi_{sd}(\tau)&=&\bar{A}\Phi_{sd}(\tau)d\tau+\bar{E}_1\Phi_{sd}(\theta)dW_1(t)\\
        &&+\bar{E}_2\Phi_{sd}(\theta)dW_2(t),\ \Phi_{sd}(0)=I
  \end{array}
  \end{equation}
  defined for $s\in[0,T_{max}]$.
\item There exist matrix-valued functions $\tilde{S}:[0,T_{max}]\to\mathbb{S}^{n+m}$, a matrix $U_d\in\mathbb{R}^{m\times (n+m)}$ and a scalar $\eps>0$ such that the conditions $\tilde{S}(0)\succ0$ and
  \begin{equation}
  \begin{bmatrix}
    \dot{\tilde{S}}(\tau)+\He[A\tilde{S}(\tau)] & \tilde{S}(\tau)E_1^T & \tilde{S}(\tau)E_2^T\\
    \star & -\tilde{S}(\tau) & 0\\
    \star & \star & -\tilde{S}(\tau)
  \end{bmatrix}\preceq0
\end{equation}
\begin{equation}
  \begin{bmatrix}
    -\tilde{S}(0)+\eps I & (J\tilde{S}(0)+B_{d}U_d)^T\\
    \star & -\tilde{S}(\theta)
  \end{bmatrix}\preceq0
\end{equation}
hold for all $\tau\in[0,T_{max}]$ and all $\theta\in[T_{min},T_{max}]$.
\end{enumerate}
Moreover, when one of the above statements holds, then the (closed-loop) sampled-data system \eqref{eq:SDsyst} is mean-square asymptotically stable under ranged dwell-time $T_k\in[T_{min},T_{max}]$ and a suitable controller gain can be computed from the expression $ K_d=U_d\tilde{S}(0)^{-1}$.
\end{theorem}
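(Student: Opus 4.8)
The plan is to recognize the sampled-data system \eqref{eq:SDsyst} as a particular closed-loop instance of the impulsive framework of Theorem \ref{th:aperiodicz}, and to deduce the result by invoking the ranged dwell-time analysis of Theorem \ref{th:aperiodic} on the closed-loop data. The only structural difference from \eqref{eq:mainsyst} with $u_c\equiv0$ is that the continuous-time part of \eqref{eq:SDsyst} is driven by \emph{two} independent Wiener processes through $\bar{E}_1$ and $\bar{E}_2$, rather than a single state-multiplicative noise $E_c$. Since $W_1$ and $W_2$ are independent, applying It\^o's formula to $\Phi_{sd}(\tau)^\T Z\Phi_{sd}(\tau)$ produces additive second-moment contributions, so the term $E_c^\T S E_c$ of Theorem \ref{th:aperiodic} is simply replaced by $\bar{E}_1^\T S\bar{E}_1+\bar{E}_2^\T S\bar{E}_2$ and everything else carries over verbatim. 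In particular, statement (a) is exactly the implicit ranged dwell-time condition \eqref{eq:expon2} for the impulsive system with state matrix $\bar{A}$, noise matrices $\bar{E}_1,\bar{E}_2$, reset matrix $J_0+\bar{B}K_d$, and no discrete noise.

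To establish the equivalence between (a) and (b), I would follow the congruence argument used in the proof of Theorem \ref{th:aperiodicz}. Starting from the first LMI of statement (b), a Schur complement with respect to the block $\diag(-\tilde{S}(\tau),-\tilde{S}(\tau))$, followed by a congruence transformation with $S(\tau):=\tilde{S}(\tau)^{-1}$, yields
\[
S(\tau)\dot{\tilde{S}}(\tau)S(\tau)+\He[S(\tau)\bar{A}]+\bar{E}_1^\T S(\tau)\bar{E}_1+\bar{E}_2^\T S(\tau)\bar{E}_2\preceq0 .
\]
Invoking the identity $\dot{S}(\tau)=-S(\tau)\dot{\tilde{S}}(\tau)S(\tau)$, this is precisely the two-noise version of the clock-dependent dissipation inequality \eqref{eq:kdskddmlkdsqdmlkswmlk}. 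Likewise, setting $U_d=K_d\tilde{S}(0)$ so that $J_0\tilde{S}(0)+\bar{B}U_d=(J_0+\bar{B}K_d)\tilde{S}(0)$, a Schur complement on the second LMI of (b) and a congruence with $S(0)=\tilde{S}(0)^{-1}$ give
\[
(J_0+\bar{B}K_d)^\T S(\theta)(J_0+\bar{B}K_d)-S(0)+\eps S(0)^2\preceq0 ,
\]
i.e. the strict clock-dependent jump condition. By the equivalence in Theorem \ref{th:aperiodic}, statement (b), these two clock-dependent conditions are equivalent to the implicit condition of statement (a), and a stabilizing gain is recovered through $K_d=U_d\tilde{S}(0)^{-1}$. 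The mean-square asymptotic stability of the closed-loop sampled-data system under ranged dwell-time $[T_{min},T_{max}]$ then follows directly from the ``moreover'' part of Theorem \ref{th:aperiodic}.

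The hard part will be the derivative bookkeeping in the congruence step: one must verify that the nonlinear term $S(\tau)\dot{\tilde{S}}(\tau)S(\tau)$ arising after the transformation collapses \emph{exactly} to $-\dot{S}(\tau)$, so that the affine-in-clock structure of the conditions is preserved and no spurious curvature terms appear. A secondary delicate point is confirming that the $\eps I$ slack in the second LMI of (b) transforms into the positive-definite term $\eps S(0)^2\succ0$ after congruence, which is exactly what delivers the strict inequality required in (a). By contrast, the passage from one to two continuous noise processes is entirely routine once the independence of $W_1$ and $W_2$ is used to split the generator additively, and the vanishing of the discrete noise ($E_d=0$, $B_d^2=0$) only simplifies the jump condition relative to Theorem \ref{th:aperiodicz}.
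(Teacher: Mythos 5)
Your proposal is correct and takes essentially the route the paper intends: the paper states Theorem \ref{th:rangeSD} without an explicit proof, and its proof of the analogous Theorem \ref{th:aperiodicz} is precisely your argument---a Schur complement followed by a congruence transformation with $S(\tau)=\tilde{S}(\tau)^{-1}$ and the substitution $U_d=K_d\tilde{S}(0)$, then an appeal to the ranged dwell-time analysis of Theorem \ref{th:aperiodic} (extended additively to the two independent noise channels, exactly as in the closed-loop LMIs of Theorem \ref{th:aperiodicz}). Your bookkeeping of $S(\tau)\dot{\tilde{S}}(\tau)S(\tau)=-\dot{S}(\tau)$ and of the $\eps$ slack term is the same as what the paper's manipulations implicitly require, so nothing is missing.
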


We now illustrate the above result by a simple example:
\begin{example}
  Let us consider the sampled-data system \eqref{eq:SDsyst} with $\alpha=0.1$ and the matrices
  \begin{equation}\label{eq:matSD}
    A_{sd}=\begin{bmatrix}
        0 & 1\\
        0 & -1
    \end{bmatrix},\ B_{sd}=\begin{bmatrix}
    0\\1
    \end{bmatrix}\ \textnormal{and}\ E_{sd}=\begin{bmatrix}
      0 & 0\\
      0 & 0.1
    \end{bmatrix}.
  \end{equation}
  Applying then Theorem \ref{th:rangeSD} with polynomials of order 2, we get the results gathered in Table \ref{tab:stabzSD}. Simulations results are depicted in Fig.~\ref{fig:SD} where we can see that the designed controllers effectively render the closed-loop system mean-square asymptotically stable under the considered ranged dwell-time conditions. We can also observe that the controllers give rise to similar performance in terms of the rate of convergence of $\E[||x(t)||_2^2]$ to 0 despite having different time-scales for the control input update.

  \begin{table}[H]
    \centering
    \caption{Various controller gains for the sampled-data system \eqref{eq:SDsyst}-\eqref{eq:matSD} obtained with Theorem \ref{th:rangeSD} using polynomials of order 2.}\label{tab:stabzSD}
    \begin{tabular}{|c|c||c|}
    \hline
      $T_{min}$ & $T_{max}$ & $K_d$\\
      \hline
      \hline
      \multirow{3}{*}{$0.001$} & 0.1 &$ (-0.4069, -0.1734, -0.0045)$\\
       & 0.5 & $(-0.4421  , -0.2137  , -0.0215    )$\\
       & 1 &$ ( -0.3410 ,  -0.1332  ,  0.0036 )$\\
       \hline
       \multirow{3}{*}{$1$} & 5 & $( -0.1977,   -0.1931  ,  0.0014     )$\\
       & 10 & $(   -0.1053  , -0.1061  ,  0.0011     )$\\
       &20 & $(     -0.0583 ,  -0.0559 ,  -0.0003   )$\\
       \hline
    \end{tabular}
  \end{table}

\begin{figure}[h]
  \centering
   \hspace{-4mm}\includegraphics[width=0.49\textwidth]{./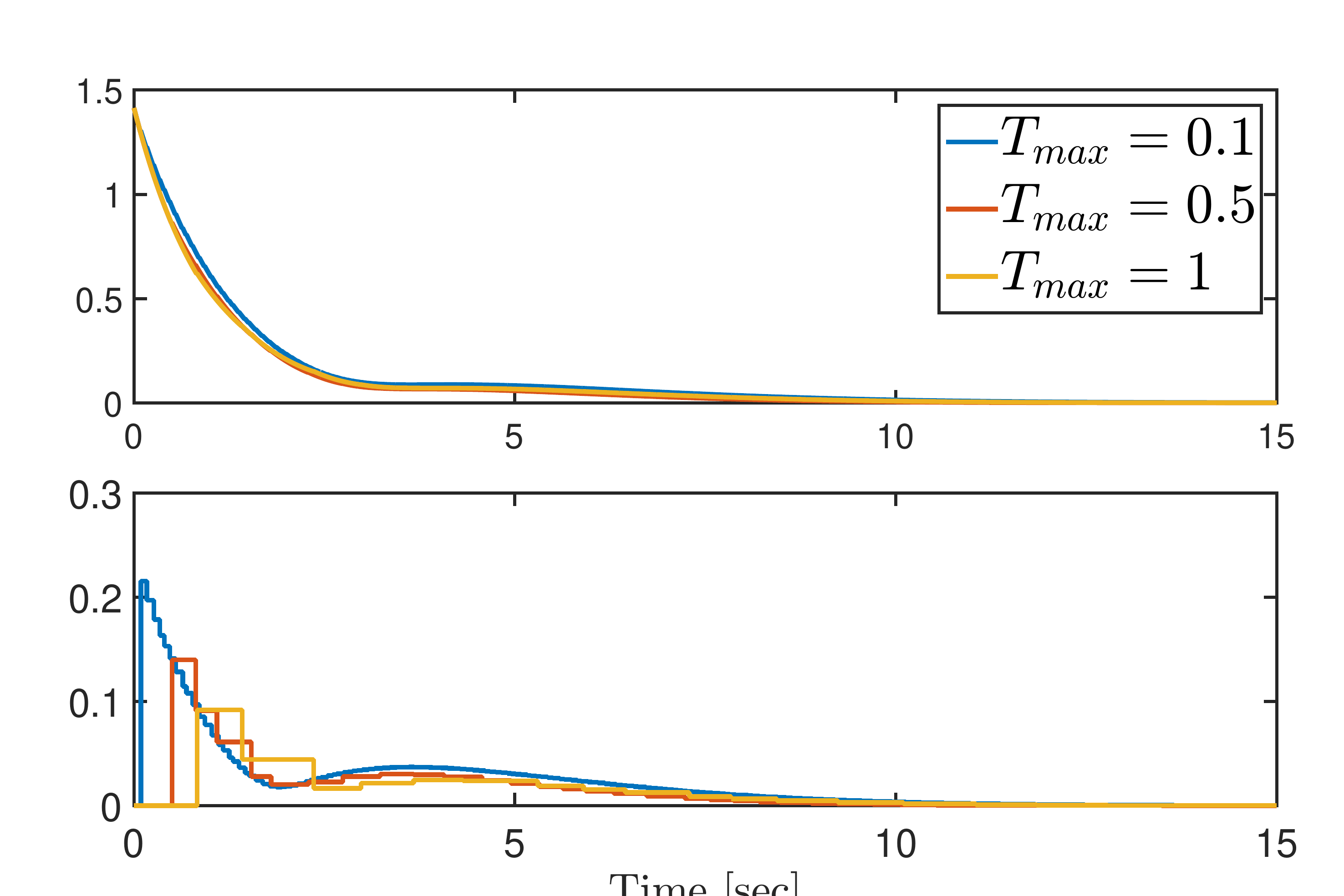}
  \caption{Evolution of $\E[||x(t)||_2^2]^{1/2}$ (top) and $\E[||u(t)||_2^2]^{1/2}$ (bottom) along the trajectories of the system  \eqref{eq:SDsyst}-\eqref{eq:matSD} with the controllers designed for $T_{min}=0.001$ and several values for $T_{max}$. The sequences of impulse times satisfying the ranged dwell-time condition are randomly generated.}\label{fig:SD}
\end{figure}
\end{example}

\section{Conclusion}

Dwell-time stability and stabilization conditions have been obtained for stochastic impulsive systems and expressed as infinite-dimensional LMI problems that can be solved using discretization or sum of squares techniques. The approach has been shown to include switched systems and sampled-data systems as particular cases, making then the approach quite general.

Possible future works include the design of dynamic output feedback controllers, observer and filters. The approach can also be extended to performance characterization using induced-norms such as the induced $L_2$-norm and the induced $L_2$-$L_\infty$-norm. Other possible extensions would concern other concepts of dwell-times \cite{Zhang:15a} and the consideration of controller/mode mismatch due to decision delays \cite{Zhang:16b}.

%\bibliographystyle{IEEEtran}
%{\bibliography{../../../../Lastbib/global,../../../../Lastbib/briat}}
% Generated by IEEEtran.bst, version: 1.13 (2008/09/30)

\end{document}